\documentclass{amsart}

\usepackage{amssymb, amsmath, latexsym, amsfonts, amsthm, mathrsfs} 
\usepackage{dsfont}
\usepackage{mathabx}
\usepackage{amsrefs} 
\usepackage{verbatim} 
\usepackage[text={6in,9in},centering]{geometry} 
\usepackage[colorlinks=true]{hyperref} 
\usepackage[doublespacing]{setspace} 
\usepackage{pgfplots}
\usepackage{graphicx}
\usepackage{subfig}
\usepackage{multirow}
\usepackage{array}
\usepackage{graphicx}
\usepackage{tikz}
\usepackage{pgfplots}
\usepackage{verbatim}
\usepackage{algpseudocode}
\usepackage{bm}
\usepackage{bbm}
\usepackage{dsfont}
\usepackage{enumitem}
\usepackage{bm}      
\usepackage{pgfplots}
\usepackage{pgfplotstable}
\usepgfplotslibrary{patchplots}
\usepgfplotslibrary{colormaps}
\pgfplotsset{compat=newest}
\usepackage{tikz}
\usepackage{algorithm}
\usepackage{caption}
\captionsetup[algorithm]{labelsep=colon}
\usepackage[utf8]{inputenc}
\usepackage{algorithm,algcompatible,amssymb,amsmath}

\algnewcommand\algorithmicto{\textbf{to}}
\algnewcommand\RETURN{\State \textbf{return} }
\captionsetup{labelformat=simple, labelsep=period}

\usepackage[toc,page]{appendix}
\newtheorem{thm}{\bf Theorem}[section]

\newtheorem{lem}[thm]{\bf Lemma}
\newtheorem{prop}[thm]{\bf Proposition}
\newtheorem{defn}[thm]{\bf Definition}
\newtheorem{rem}[thm]{\bf Remark}
\newtheorem{exam}[thm]{\bf Example}

\usepackage{amsmath,amssymb}
\usepackage[bb=boondox]{mathalfa}
\numberwithin{equation}{section}
\def\zR{\mathbb R}

\setlength{\parskip}{0pt} 
\setlist{nosep} 
\begin{document}
	\def\leftmark{ }
	\renewcommand{\thefootnote}{}
	\title[ STOCHASTIC MATRICES AND MAJORIZATION IN MAX ALGEBRA ]{ STOCHASTIC MATRICES AND MAJORIZATION IN MAX ALGEBRA}

	\subjclass[2020]{15A80, 15B51, 06A06 }

	\maketitle
	
	\authors{S. M. Manjegani, T. Parsa}
	
	\begin{abstract}

In this paper, we introduce and characterize max-doubly stochastic matrices within the framework of max algebra, where the operations are defined as $x \oplus y = \max(x, y)$ and $x \otimes y = xy$. We explore the fundamental properties of max-doubly stochastic matrices and their role in vector majorization. Specifically, we establish that for vectors $x$ and $y$ in max algebra, $x$ is majorized by $y$ if there exists a max-doubly stochastic matrix $D$ such that $x = D \otimes y$. This provides a new approach to majorization theory within tropical mathematics and enhances the understanding of vector relations in max algebra.
	\end{abstract}
	\keywordsname:{ max algebra, max-doubly stochastic matrix, max-majorization}
	
	\maketitle

	
	\section{\textbf{Introduction}}

Majorization is a foundational concept in the study of inequalities within the space of real vectors, with its origins rooted in the pioneering works of Muirhead, Lorenz, Dalton, Schur, Hardy, Littlewood, and Polya \cite{arnold, marshal}. The theory of majorization has far-reaching applications across virtually most areas of mathematics, making it a critical tool for understanding and comparing vector relations \cite{marshal, nielsen }.

Simultaneously, tropical geometry has emerged as a powerful framework in various mathematical applications, including scheduling, optimal control, discrete event systems, and statistical physics. Central to tropical geometry is the max-plus semiring, a semiring defined over $\zR\cup\{-\infty\}$ with tropical addition ($\oplus$) and tropical multiplication ($\otimes$). These operations are defined as $x\oplus y=\max(x,y)$ and $x\otimes y=x+y$, with identity elements of $-\infty$ for addition and $0$ for multiplication \cite{butkovich}.  We denote this semiring by $\zR_{\max,+}$.

This paper focuses on an isomorphic variation of the max-plus algebra known as max algebra. In max algebra, the semiring is defined as ($\zR_{\geq 0}, ~\oplus, ~\otimes$), where the operations are $x \oplus y = \max(x, y)$ and $x \otimes y = xy$ \cite{butkovich}. We denote this semiring by $\zR_{\max}$.

These operations extend naturally to matrices and vectors. Given matrices $A = (a_{i,j})$, $B = (b_{i,j})$ and $C = (c_{i,j})$ of compatible sizes with entries from $\zR_{\max}$ and a scalar $\alpha \in \zR_{\max}$, matrix operations are defined as follows:
$C = A \oplus B$ if $c_{i,j} = a_{i,j} \oplus b_{i,j}$ for all $i, j$;
$C = A \otimes B$ if $c_{i,j} = \bigoplus_k \left(a_{i,k} \otimes b_{k,j}\right) = \max_k (a_{i,k} b_{k,j})$ for all $i, j$ and
$C = \alpha \otimes A=\alpha A$ if $c_{i,j} = \alpha a_{i,j}$ for all $i, j$ \cite{butkovich}.
Since the operations of max-addition and scalar multiplication are well-defined for matrices of compatible sizes, we will consistently apply this interpretation to the set of $n \times n$ matrices throughout the discussion. We use the symbols $ M_{n}\left( \zR _{\max} \right) $ and $ \zR _{\max}^{n} $, respectively, to represent the set of $ n\times n $ matrices with entries from $ \zR_{\max} $ and the set of vectors defined on max algebra.

The objective of this paper is to introduce and study the notion of vector majorization within the max algebra framework. In classical majorization theory, a vector $x$ is said to be majorized by $y$ (denoted $x \prec y$) if and only if there exists a doubly stochastic matrix $D$ such that $x = Dy$ \cite{ando}. This work extends the classical concept by exploring majorization with respect to max-doubly stochastic matrices, offering new insights into the structure and properties of vector majorization in max algebra.
\section{\textbf{Stochastic Matrices in Max Algebra}}
In this section, we define max-doubly stochastic matrices and explore their key properties. We will demonstrate that the set of all max-doubly stochastic matrices forms a max-convex set. Furthermore, we will characterize the max-extreme points of this set.
We denote the vectors $\left( 1~1~\dots~ 1\right)^T $ and $\left( 0~0~\dots ~0\right)^T\in \zR_{\max}^{n} $ by $ \mathbb{1}$ and $\mathbb{O} $, respectively and $ e_i$ is a vector in $ \zR_{\max}^{n} $, whose $ i $-th entry is $ 1 $ and the rest of its entries are zero.
\begin{defn}\label{def2.1}
		Suppose that $D\in  M_{n}\left( \zR _{\max} \right) $. 
		\begin{enumerate}
			\item\label{def3.1}
			$ D $ is a max-row stochastic matrix if $ D\otimes \mathbb{1}=\mathbb{1} $.
			\item\label{def3.2}
			$ D $ is a max-column stochastic matrix if $\mathbb{1}^T  \otimes D =\mathbb{1}^T$.
			\item
			$ D $ is a max-doubly stochastic matrix if $ D $ is a max-row stochastic matrix and a max-column stochastic matrix.
	\end{enumerate}
	\end{defn}
\begin{exam}
There are some examples of max-doubly stochastic matrix.
\begin{align*}
	I= \begin{pmatrix}
		1 & 0  & 0 \\
		0 & 1 & 0 \\
		0 & 0 & 1
	\end{pmatrix},~~
	P= \begin{pmatrix}
		0 & 1  & 0 \\
		1 & 0 & 0 \\
		0 & 0 & 1
	\end{pmatrix},~~
	D_{1}= \begin{pmatrix}
		\frac{1}{2} & \frac{1}{4}  & 1 \\
		\frac{4}{5} & 1 & \frac{2}{3} \\
		1 & \frac{2 }{3}& \frac{6}{7}
	\end{pmatrix} ~~\mbox{and}~~
	D_{2}= \begin{pmatrix}
		1 & 1  & 0 \\
		0 & 1 & 1 \\
		0 & 0 & 1
	\end{pmatrix}.
\end{align*}
In fact, every row and column of max-doubly stochastic matrices has at least one entry equal to 1.
\end{exam}
We will denote the set of all  $ n\times n $ max-doubly stochastic matrices by $ \bm{MDS_n\left(\mathbb{R}_{\max}\right)}$.
It is interesting to note that if $D_1$ and $D_2$ belong to the set of max-doubly stochastic matrices, then both $D_1 \oplus D_2$ and $D_1 \otimes D_2$ also belong to $MDS_n(\mathbb{R}_{\max})$. 
\begin{thm}\label{thm2.2}
	For \( D_1, D_2 \in MDS_n\left(\mathbb{R}_{\max}\right) \), both \( D_1 \oplus D_2 \) and \( D_1 \otimes D_2 \) are max-doubly stochastic matrices.
	\begin{proof}
		Let \( D_1, D_2 \in MDS_n\left(\mathbb{R}_{\max}\right) \). Then,
		\[
		\left(D_1 \oplus D_2\right) \otimes \mathbb{1} = \left(D_1 \otimes \mathbb{1}\right) \oplus \left(D_2 \otimes \mathbb{1}\right) = \mathbb{1} \oplus \mathbb{1} = \mathbb{1},
		\]
		and
		\[
		\left(D_1 \otimes D_2\right) \otimes \mathbb{1} = D_1 \otimes \left(D_2 \otimes \mathbb{1}\right) = D_1 \otimes \mathbb{1} = \mathbb{1}.
		\]
		This implies that \( D_1 \oplus D_2 \) and \( D_1 \otimes D_2 \) are max-row stochastic matrices. Similarly proves that \( D_1 \oplus D_2 \) and \( D_1 \otimes D_2 \) are max-column stochastic matrices.
	\end{proof}
\end{thm}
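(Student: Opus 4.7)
My plan is to verify the four required identities directly from Definition~\ref{def2.1} by exploiting the semiring structure of $\mathbb{R}_{\max}$, namely distributivity of $\otimes$ over $\oplus$ and associativity of $\otimes$. The strategy is to treat the two operations $\oplus$ and $\otimes$ separately, and within each case handle the row condition first, then deduce the column condition by the symmetric argument on the transpose.

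For the sum $D_1 \oplus D_2$, I would first observe the identity $(A \oplus B) \otimes v = (A \otimes v) \oplus (B \otimes v)$, which follows entrywise from $\max_k\left(\max(a_{i,k},b_{i,k}) \cdot v_k\right) = \max_k(a_{i,k} v_k) \oplus \max_k(b_{i,k} v_k)$. Applying this with $v = \mathbb{1}$ and using $D_j \otimes \mathbb{1} = \mathbb{1}$ for $j=1,2$ gives $\mathbb{1}\oplus\mathbb{1} = \mathbb{1}$, which is the row stochastic condition. The column condition is obtained by the analogous left-multiplication identity $\mathbb{1}^T \otimes (D_1 \oplus D_2) = (\mathbb{1}^T \otimes D_1) \oplus (\mathbb{1}^T \otimes D_2) = \mathbb{1}^T$.

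For the product $D_1 \otimes D_2$, I would use associativity of max-algebra matrix multiplication, i.e.\ $(A \otimes B) \otimes v = A \otimes (B \otimes v)$, which follows entrywise from $\max_k\left(\left(\max_\ell a_{i,\ell} b_{\ell,k}\right) v_k\right) = \max_{\ell,k} a_{i,\ell} b_{\ell,k} v_k = \max_\ell a_{i,\ell}\left(\max_k b_{\ell,k} v_k\right)$. Substituting $v = \mathbb{1}$ gives $D_1 \otimes (D_2 \otimes \mathbb{1}) = D_1 \otimes \mathbb{1} = \mathbb{1}$, and the column condition follows by the mirror calculation with $\mathbb{1}^T$ on the left.

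There is no serious obstacle here; the only thing one must be careful about is making explicit the distributivity/associativity laws in the max semiring, since these are the workhorses of the argument and are not always stated in the preliminaries. If desired, I could state them once as a short remark and then invoke them four times rather than reprove them for the column cases, which is the most economical presentation.
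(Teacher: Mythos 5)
Your proposal is correct and follows essentially the same route as the paper: apply distributivity of $\otimes$ over $\oplus$ (resp.\ associativity of $\otimes$) to $(D_1\oplus D_2)\otimes\mathbb{1}$ and $(D_1\otimes D_2)\otimes\mathbb{1}$, then repeat with $\mathbb{1}^T$ on the left for the column conditions. The only difference is that you spell out the entrywise verification of the semiring identities, which the paper takes for granted.
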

Because of being isomorphic $\mathbb{R}_{\max,+}$ and $\mathbb{R}_{\max}$, we can refer to relation $ \leq $ and its properties on max algebra. For $a, b \in \mathbb{R}_{\max}$, $a \leq b$ holds, whenever $a \oplus b = b$.
For vectors $x, y \in\zR _{\max}^{n}$,  $x \leq y$ if and only if $x_i \leq y_i$ for all $1 \leq i \leq n$. Similarly; for matrices $ S,D\in M_{n}\left( \zR _{\max} \right) $, the relation $ S\leq D $ is defined in an analogous manner \cite{butkovich}.
\begin{thm}\label{theorem6}
	Let \( D_1, D_2 \in MDS_n\left(\mathbb{R}_{\max}\right) \). For any matrix \( D \in M_n\left( \mathbb{R}_{\max} \right) \), if \( D_1 \leq D \leq D_2 \), then \( D \) is a max-doubly stochastic matrix.
	\begin{proof}
		By \cite[Lemma 1.1.1]{butkovich} and the isomorphism between \( \mathbb{R}_{\max} \) and \( \mathbb{R}_{\max,+} \), we have
		\[
		\mathbb{1} = D_1 \otimes \mathbb{1} \leq D \otimes \mathbb{1} \leq D_2 \otimes \mathbb{1} = \mathbb{1}.
		\]
		Thus, \( D \otimes \mathbb{1} = \mathbb{1} \), which shows that \( D \) is a max-row stochastic matrix. A similar argument proves that \( D \) is a max-column stochastic matrix, too.
	\end{proof}
\end{thm}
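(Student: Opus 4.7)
The plan is to exploit the monotonicity of max-algebra multiplication with respect to the componentwise order $\leq$. The key algebraic fact I would invoke is that for matrices $A, B \in M_n(\zR_{\max})$ with $A \leq B$, and for any vector $v$ with entries in $\zR_{\max} = \zR_{\geq 0}$, one has $A \otimes v \leq B \otimes v$; symmetrically, left-multiplication by a fixed nonnegative row vector also preserves $\leq$. This is exactly the content of Lemma 1.1.1 in Butkovi\v{c}, transported from $\zR_{\max,+}$ to $\zR_{\max}$ through the isomorphism already cited in the paper, and it is valid here because all entries live in $\zR_{\geq 0}$, so neither $\max$ nor ordinary multiplication can reverse inequalities.

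Given this, the row-stochastic check is immediate: from $D_1 \leq D \leq D_2$ I would right-multiply through by $\mathbb{1}$ to obtain the sandwich
\[
\mathbb{1} = D_1 \otimes \mathbb{1} \;\leq\; D \otimes \mathbb{1} \;\leq\; D_2 \otimes \mathbb{1} = \mathbb{1},
\]
where the outer equalities come from the hypothesis that $D_1, D_2 \in MDS_n(\zR_{\max})$. Antisymmetry of $\leq$ then forces $D \otimes \mathbb{1} = \mathbb{1}$, so $D$ is max-row stochastic by Definition \ref{def2.1}.

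The column condition is handled in exactly the same way by multiplying the inequality on the left by $\mathbb{1}^T$ and using monotonicity in the left argument: this gives $\mathbb{1}^T = \mathbb{1}^T \otimes D_1 \leq \mathbb{1}^T \otimes D \leq \mathbb{1}^T \otimes D_2 = \mathbb{1}^T$, hence $\mathbb{1}^T \otimes D = \mathbb{1}^T$. Combining the two conclusions shows $D \in MDS_n(\zR_{\max})$.

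I do not anticipate a genuine obstacle. The only point that needs even a line of care is justifying that $\otimes$ is monotone in each argument, and this is where the appeal to \cite[Lemma 1.1.1]{butkovich} does the work; nonnegativity of all entries, which is built into $\zR_{\max}$, is what makes the monotonicity hold in both arguments simultaneously.
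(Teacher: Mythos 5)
Your proposal is correct and follows essentially the same route as the paper: both invoke the monotonicity of $\otimes$ from \cite[Lemma 1.1.1]{butkovich} (via the isomorphism with $\mathbb{R}_{\max,+}$) to sandwich $D\otimes\mathbb{1}$ between $D_1\otimes\mathbb{1}=\mathbb{1}$ and $D_2\otimes\mathbb{1}=\mathbb{1}$, and then handle the column condition symmetrically. Your extra remark on why monotonicity holds in both arguments is a harmless elaboration of the same argument.
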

\begin{defn}\label{def2.2}
		Let $A\in  M_{n}\left( \zR _{\max} \right) $.
		\begin{enumerate}
			\item 
				$ A $ is max-trace preserving if $ \text{tr}\left( A\otimes x\right)=\text{tr} \left( x \right) $ for all  $ x\in\zR _{\max}^{n} $, where $ \text{tr} \left( x \right)=\max_{i\leq 1\leq n} x_i $.
			\item 
				$ A $ is max-unital preserving if $ A\otimes\mathbb{1}=\mathbb{1}$.
		\end{enumerate}	

\end{defn}
We can see in the following theorem the relationship between max-doubly stochastic matrices and Definition \ref{def2.2}.
\begin{thm}
	$D \in M_{n}\left( \zR _{\max} \right)$ is a max-doubly stochastic matrix if and only if $D$ is max-unital preserving and max-trace preserving.
	\begin{proof}
		Let $D \in M_{n}\left( \zR _{\max} \right)$ be a max-doubly stochastic matrix, and let $x = \left(x_1, x_2, \dots, x_n\right)^T \in \zR_{\max}^{n}$. By Definition \ref{def2.1}, since $D \otimes \mathbb{1} = \mathbb{1}$, it follows that the matrix $D$ is max-unital preserving. 
		
		For max-trace preservation, note that $\text{tr}(x) = \mathbb{1}^T \otimes x$. Thus,
		\[
		\text{tr}(D \otimes x) = \mathbb{1}^T \otimes \left(D \otimes x\right) = \left(\mathbb{1}^T \otimes D\right) \otimes x = \mathbb{1}^T \otimes x = \text{tr}(x).
		\]
		Hence, the matrix $D$ is also max-trace preserving.
		
		Conversely, suppose the matrix $D$ is max-unital preserving and max-trace preserving. Since $D$ is max-unital preserving, it is a max-row stochastic matrix. We now show that $D$ is also a max-column stochastic matrix.
		Because $D$ is a max-trace preserving matrix, for each $1 \leq i \leq n$, we have
		\[
		\bigoplus_{j=1}^{n} d_{j,i} = \text{tr}\left(D \otimes e_i\right) = \text{tr}\left(e_i\right) = 1.
		\]	
		Therefore, $\mathbb{1}^T \otimes D = \mathbb{1}^T$, which shows that $D$ is a max-column stochastic matrix.
	\end{proof}
\end{thm}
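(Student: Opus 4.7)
The plan is to leverage the simple but crucial identity $\mathrm{tr}(x) = \mathbb{1}^T \otimes x$, which rewrites the max-trace of a vector as a max-product with the all-ones vector. Once this observation is in hand, both directions reduce to the associativity of $\otimes$ together with the two defining identities $D \otimes \mathbb{1} = \mathbb{1}$ and $\mathbb{1}^T \otimes D = \mathbb{1}^T$.

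For the forward direction, assume $D$ is max-doubly stochastic. The equality $D \otimes \mathbb{1} = \mathbb{1}$ is precisely the definition of max-unital preservation, so nothing more is needed there. For the max-trace preservation, pick any $x \in \mathbb{R}_{\max}^n$ and compute
\begin{equation*}
\mathrm{tr}(D \otimes x) \;=\; \mathbb{1}^T \otimes (D \otimes x) \;=\; (\mathbb{1}^T \otimes D) \otimes x \;=\; \mathbb{1}^T \otimes x \;=\; \mathrm{tr}(x),
\end{equation*}
where the middle equality uses associativity of $\otimes$ and the third uses the max-column stochastic property.

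For the converse, assume $D$ is both max-unital preserving and max-trace preserving. Unital preservation immediately says $D$ is max-row stochastic, so the only task is to recover the max-column stochastic condition. The natural idea is to test the trace preservation hypothesis on the standard basis vectors $e_i$: the product $D \otimes e_i$ selects the $i$-th column of $D$, so $\mathrm{tr}(D \otimes e_i) = \bigoplus_{j=1}^{n} d_{j,i}$, while $\mathrm{tr}(e_i) = 1$. Equating these for every $i$ yields $\mathbb{1}^T \otimes D = \mathbb{1}^T$, which finishes the proof.

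The main obstacle, such as it is, is simply noticing the identity $\mathrm{tr}(x) = \mathbb{1}^T \otimes x$; once that is in place, everything else is a short calculation using associativity and evaluation on the basis $\{e_i\}$. There are no convergence or sign issues, and the isomorphism with $\mathbb{R}_{\max,+}$ is not needed here since we work directly with $\oplus$ and $\otimes$.
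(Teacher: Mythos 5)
Your proposal is correct and follows essentially the same route as the paper: the forward direction via the identity $\mathrm{tr}(x) = \mathbb{1}^T \otimes x$ and associativity, and the converse by testing trace preservation on the basis vectors $e_i$ to recover the column condition. No gaps.
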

Recall from \cite{butkovich} that an \textbf{eigenvector} of a matrix $A \in M_n(\mathbb{R}_{\max})$ is a nonzero vector $x \in \mathbb{R}_{\max}^n$ that satisfies $A \otimes x = \lambda \otimes x$ for some scalar $\lambda \in \mathbb{R}_{\max}$, called the \textbf{eigenvalue} of the matrix $A$.
The \textbf{spectrum} of a matrix $A\in M_n(\zR_{\max})$ in max algebra, denoted by $\mathbf{\sigma}_{\otimes}(A)$, is defined as
\[
\sigma_{\otimes}(A)=\{\lambda\ge 0~|  ~\mbox{there exists}~ 0\neq x\in\zR_{\max}^{n}~\mbox{ with}~
A\otimes x=\lambda x\}.
\]
The \textbf{spectral radius} of $A\in M_n(\zR_{\max})$ represents the maximum cycle geometric mean of $A$ and is defined by:
\begin{align}\label{rabete1.1}
	r_{\otimes}\left( A \right) &=\max \left\{  \left(  a_{i_{1},i_{2}} a_{i_2,i_3}\cdots a_{i_k, i_1}\right)^{\frac{1}{k}} ~| ~ 1\leq k\leq n,~ i_1,\dots ,i_k\in \left\{1,\dots ,n \right\}~ \mbox{are distinct}\right\}\\
	&= \max \left\{ \lambda~| ~\lambda\in \sigma_{\otimes} \left( A \right) \right\}.\notag
\end{align}	
The \textbf{local spectral radius} of \( A \in M_n(\mathbb{R}_{\max}) \) at \( x \in \mathbb{R}_{\max}^n \) in max algebra is defined in \cite{MP} as:
\begin{equation*}
	r_x(A)=\underset{k\rightarrow \infty}{\limsup}\Vert {A_{\otimes} ^k} \otimes x\Vert ^{1/k};
\end{equation*}
where $  r_x(A) $ is independent of choice of any vector norm. The norm \( \|\cdot\| \) can be defined as \( \|x\| = \max_{1 \leq i \leq n} |x_i| \) for \( x \in \mathbb{C}^n \). It follows, as stated in \cite{MP}, that  
\[  
r_x(A) = \max\{r_{e_i}(A)~|~x_i \neq 0,~1 \le i \le n\}.  
\]  
As noted in \cite{MPS22}, this leads to  
\[  
r_{\otimes}\left(A\right) = \max\{r_x(A) :~ x \in \mathbb{R}_{\max}^n\}.  
\]
Finally, if $A\in M_n(\zR_{\max})$ is an irreducible matrix, then the max spectrum of $A$ has a unique element which is $r_{\otimes}\left( A \right)$.
Inspired by \cite[Remark 4.3.6]{butkovich}, in the following lemma, it is demonstrated that the spectral radius of any matrix is equal to the spectral radius of its transpose in the context of max algebra.
\begin{lem}\label{lem1.5}
	Let $ A\in M_n(\zR_{\max}) $. Then $ r_{\otimes}(A)=r_{\otimes}(A^T) $ that $ A^T $ is the transpose of $ A $.
	\begin{proof}
		The matrix $ A $ can be represented as a directed graph where the entries of $ A $ correspond to the weight of edges between vertices. The transpose of the matrix $ A  $ reverses the direction of the edges in the graph, but the weights of the edges remain the same.		
		Since the cycle and their corresponding geometric mean remain unchanged when transposing the matrix $ A $, So $ r_{\otimes}(A)=r_{\otimes}(A^T)  $.
	\end{proof}
\end{lem}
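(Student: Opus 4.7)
The plan is to work directly from the combinatorial characterization of the max-algebraic spectral radius given in equation (1.1): $r_\otimes(A)$ equals the maximum, over all simple cycles in the weighted directed graph associated with $A$, of the geometric mean of the cycle's edge weights. Since both $A$ and $A^T$ give rise to matrices of the same size and the formula is purely combinatorial, it suffices to exhibit a weight- and length-preserving bijection between the set of simple cycles of $A$ and the set of simple cycles of $A^T$.

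The bijection I would use sends a cycle $(i_1, i_2, \dots, i_k, i_1)$ in the graph of $A$ to the reversed cycle $(i_1, i_k, i_{k-1}, \dots, i_2, i_1)$ in the graph of $A^T$. The key computation is that if we denote $B = A^T$, so that $b_{i,j} = a_{j,i}$, then the product of edge weights along the reversed cycle is
\[
b_{i_1, i_k} \, b_{i_k, i_{k-1}} \cdots b_{i_3, i_2} \, b_{i_2, i_1} = a_{i_k, i_1} \, a_{i_{k-1}, i_k} \cdots a_{i_2, i_3} \, a_{i_1, i_2},
\]
which, by commutativity of ordinary multiplication, is exactly the product $a_{i_1, i_2} a_{i_2, i_3} \cdots a_{i_k, i_1}$ appearing in the cycle weight of $A$. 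The cycle length $k$ is also preserved, so the geometric mean is unchanged. The map is clearly involutive (applying it twice to a cycle in $A$ returns the same cycle), hence a bijection between cycles of $A$ and cycles of $A^T$.

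Taking the maximum over all $1 \le k \le n$ and all tuples of distinct indices $i_1, \dots, i_k$ on both sides of this correspondence gives $r_\otimes(A) = r_\otimes(A^T)$. The only subtlety worth verifying is that the formula (1.1) genuinely encodes the graph-theoretic maximum cycle geometric mean, but this is already stated there. I do not expect any real obstacle; the argument is essentially the one sketched by the authors, made precise by writing down the reversed-cycle bijection and invoking commutativity of the ordinary product that underlies the max-algebra multiplication.
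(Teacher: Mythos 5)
Your proposal is correct and follows essentially the same route as the paper's proof: both arguments rely on the graph-theoretic characterization of $r_{\otimes}$ from \eqref{rabete1.1} and the observation that transposition reverses edge directions while preserving cycle weights. Your version merely makes the reversed-cycle bijection and the commutativity computation explicit, which the paper leaves as a sketch.
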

We can consider the matrix $A\in M_n(\zR_{\max})$ as a vector in $ \mathbb{R}^{n\times n} $. Accordingly; by considering the following vector norm \cite{johnson},
\[
	\| A\|=\max_{1\leq i,j\leq n}|a_{i,j}|, \qquad A\in M_n(\mathbb{C});
\]
we can prove that the norm of any max-row (column) stochastic matrix is equal to one.
\begin{thm}\label{lem1.1}
	If $ D\in  M_{n}\left( \zR _{\max} \right) $ is a max-row (column) stochastic matrix then
	\begin{align*}
		r_{\otimes}\left(D\right)=1,\quad \| D\| =1  \quad\mbox{and}\quad   d_{i,j}\leq 1\quad \mbox{for} \quad 1\leq i,j\leq n.
	\end{align*}
\end{thm}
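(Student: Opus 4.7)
The plan is to unpack the definition of a max-row stochastic matrix coordinatewise, extract the pointwise bound $d_{i,j}\le 1$, and then deduce both the norm identity and the spectral radius from this. The column-stochastic case will be reduced to the row-stochastic case via Lemma \ref{lem1.5}.

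First I would observe that the equation $D\otimes\mathbb{1}=\mathbb{1}$ reads, entry by entry, as $\max_{1\le j\le n} d_{i,j}=1$ for every row $i$. Since the max of the $i$-th row equals $1$, each entry in that row satisfies $d_{i,j}\le 1$, and at least one entry in that row equals $1$. Applying this to all rows gives the bound $d_{i,j}\le 1$ for every pair $(i,j)$; combined with the existence of a $1$ in each row, this immediately yields $\|D\|=\max_{i,j} d_{i,j}=1$.

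For the spectral radius, the inequality $r_\otimes(D)\le 1$ is immediate from formula (\ref{rabete1.1}): every cycle weight $d_{i_1,i_2}d_{i_2,i_3}\cdots d_{i_k,i_1}$ is a product of numbers in $[0,1]$, so its $k$-th root lies in $[0,1]$. For the reverse inequality I would use a combinatorial pigeonhole argument on the positions of the row-maxima. Since every row has at least one entry equal to $1$, we can choose a map $\sigma\colon\{1,\dots,n\}\to\{1,\dots,n\}$ with $d_{i,\sigma(i)}=1$ for all $i$. Iterating $\sigma$ from any starting vertex forces the orbit to enter a cycle $i_1\to i_2\to\cdots\to i_k\to i_1$ consisting of distinct indices with $\sigma(i_\ell)=i_{\ell+1}$; along this cycle every weight is $1$, so the cycle geometric mean is $1$, and therefore $r_\otimes(D)\ge 1$. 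This is the only non-routine step and is really the main obstacle, though it is a standard finite-graph argument.

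Finally, for a max-column stochastic $D$, note that $\mathbb{1}^T\otimes D=\mathbb{1}^T$ is equivalent to $D^T\otimes\mathbb{1}=\mathbb{1}$, so $D^T$ is max-row stochastic. The bound $d_{i,j}\le 1$ and the norm statement $\|D\|=\|D^T\|=1$ then transfer directly, while $r_\otimes(D)=r_\otimes(D^T)=1$ by Lemma \ref{lem1.5}. This completes the argument in both cases without any additional work.
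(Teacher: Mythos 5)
Your proposal is correct, and its overall skeleton (pointwise bound from the row equation, norm from the bound plus existence of a $1$ in each row, upper bound on $r_{\otimes}$ from the cycle formula, reduction of the column case to the row case via Lemma \ref{lem1.5}) matches the paper's proof. The one place where you genuinely diverge is the lower bound $r_{\otimes}(D)\geq 1$. The paper gets this in a single line from the eigenvalue characterization: since $D\otimes\mathbb{1}=\mathbb{1}$, the vector $\mathbb{1}$ is an eigenvector with eigenvalue $1$, so $1\in\sigma_{\otimes}(D)$ and the second equality in \eqref{rabete1.1} gives $r_{\otimes}(D)\geq 1$. You instead work entirely with the cycle-geometric-mean characterization: pick $\sigma$ with $d_{i,\sigma(i)}=1$ for each row, iterate until the orbit closes into a cycle of distinct indices, and observe that this cycle has geometric mean $1$. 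Your argument is correct (every self-map of a finite set has a periodic orbit, and the resulting cycle has all weights equal to $1$), and it has the mild advantage of not invoking the identity $r_{\otimes}(A)=\max\sigma_{\otimes}(A)$, relying only on the combinatorial definition of the spectral radius; the cost is that it is longer than the paper's one-line observation. Both routes are legitimate given how \eqref{rabete1.1} is stated in the paper.
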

\begin{proof}
	Let \( D \in M_n(\mathbb{R}_{\max}) \) be a max-row stochastic matrix. By Definition \ref{def2.1}, we have \( d_{i,j} \leq 1 \) for all \( i \) and \( j \), which implies \( \|D\| = 1 \). Furthermore, since \( D \otimes \mathbb{1} = \mathbb{1} \), it follows that \( 1 \in \sigma_{\otimes}(D) \), meaning \( r_{\otimes}(D) \geq 1 \). From relation \eqref{rabete1.1}, the average geometric mean of any cycle in the associated graph is at most \( 1 \), giving \( r_{\otimes}(D) \leq 1 \). Thus, \( r_{\otimes}(D) = 1 \).

	If $D$ is instead a max-column stochastic matrix, then its transpose $D^T$ is a max-row stochastic matrix. As a result, we again have $ d_{i,j}\leq 1 $, and therefore $ \| D\|=\| D^T\| =1 $.  Additionally;  since $ D^T\otimes \mathbb{1} =\mathbb{1}$, it follows that $ r_{\otimes}(D^T)=1 $. Thus, by utilizing Lemma \ref{lem1.5}, we can conclude that $ r_{\otimes}(D)=1 $.
\end{proof}
\begin{prop}\label{rem3}
	In a similar manner to the previous theorem, we can conclude that for \( D \in MDS_n(\mathbb{R}_{\max}) \):
	\begin{align*}
		r_{\otimes}\left(D\right)=1,\quad \| D\| =1  \quad\mbox{and}\quad   d_{i,j}\leq 1\quad \mbox{for} \quad 1\leq i,j\leq n.
	\end{align*}
\end{prop}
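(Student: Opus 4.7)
The statement follows almost immediately from Theorem \ref{lem1.1}, so the plan is essentially an appeal to that result together with the definition of a max-doubly stochastic matrix.

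My plan is to observe that by Definition \ref{def2.1}, if $D \in MDS_n(\mathbb{R}_{\max})$, then $D$ is, in particular, a max-row stochastic matrix (it also happens to be a max-column stochastic matrix, but one of these is enough). I would then simply invoke Theorem \ref{lem1.1} for max-row stochastic matrices to conclude all three assertions simultaneously: $d_{i,j} \leq 1$ for every $i,j$, $\|D\| = 1$, and $r_{\otimes}(D) = 1$.

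If one wanted to be more self-contained, the entry bound follows from $d_{i,j} = d_{i,j} \cdot 1 \leq \bigoplus_{k} d_{i,k} \cdot 1 = (D \otimes \mathbb{1})_i = 1$, which together with the existence of at least one unit entry in each row (forced by $D \otimes \mathbb{1} = \mathbb{1}$) gives $\|D\| = 1$. For the spectral radius, $D \otimes \mathbb{1} = \mathbb{1} = 1 \otimes \mathbb{1}$ shows $1 \in \sigma_{\otimes}(D)$, hence $r_{\otimes}(D) \geq 1$; conversely, every cycle in the weighted digraph of $D$ has edges of weight at most $1$, so every cycle geometric mean is at most $1$, forcing $r_{\otimes}(D) \leq 1$ by \eqref{rabete1.1}.

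There is no real obstacle here: the proposition is a corollary of Theorem \ref{lem1.1}. The only mild subtlety is to note that one does not need to use both the row and column stochasticity, as either alone already delivers the three conclusions; the double stochasticity is only used to ensure that $D$ lies in the class to which Theorem \ref{lem1.1} applies.
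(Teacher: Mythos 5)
Your proposal is correct and matches the paper's intent exactly: the paper offers no separate argument for Proposition \ref{rem3} beyond noting that it follows "in a similar manner" from Theorem \ref{lem1.1}, and your observation that a max-doubly stochastic matrix is in particular max-row stochastic (so that theorem applies directly) is precisely the intended reasoning. The self-contained verification you sketch also mirrors the paper's proof of Theorem \ref{lem1.1} itself, so there is nothing to add.
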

In exploring the framework of max-convex analysis, our approach is structured around foundational results that offer key insights into the subject. These results act as the cornerstones of max-convex analysis, forming a solid foundation for further exploration and investigation. Inspired by the source \cite{gabert2}, we define the following concepts.
\begin{defn}\label{dfn3}
	Let $S$ be a subset of $\zR_{max}^{n}$ or ($M_{n}\left( \zR_{\max} \right)$). The max-convex combination of finite elements  of $ S $ such $ s_1,\dots,s_i$ can be written as $ \alpha_{1} s_{1} \oplus\dots\oplus \alpha_{i} s_{i} $, where $I$ is a finite set, $ \{ s_{i}\}_{i\in I}$ is a family of elements of $S$ and $\{ \alpha_{i}\}_{i\in I}$ are scalars that satisfy $\bigoplus_{i\in I}\alpha_{i}=1$. The max-convex hull of $S$, denoted by $\mathrm{co}_{\max}\left(S\right)$, is the set of all finite max-convex combinations of elements of S. 
\end{defn}
\begin{defn}\label{dfn1}
	A set $S \subseteq\zR_{max}^{n}$ or ($M_{n}\left( \zR _{\max} \right)$) is said to be max-convex if $ \alpha_1 s_1\oplus\alpha_2  s_2 \in S $ for every $s_1,s_2\in S$ and $ \alpha_1 ,\alpha_2\in\zR_{\max} $ with $\alpha_1\oplus\alpha_2 =1 $.
\end{defn}
\begin{defn}\label{dfn2}
	 Let $ S $ be a max-convex subset of $ \zR_{\max}^{n}$ or ($M_{n}\left( \zR _{\max} \right)$). An element $ s\in S $ is called a max-extreme point of $ S $ if for all $ s_1,s_2\in S $ and $ \alpha_1 ,\alpha_2\in\zR_{\max} $ such that $ \alpha_1\oplus\alpha_2 = 1 $ and $ s=\alpha_1 s_1\oplus \alpha_2 s_2 $, it follows that $ s=s_1 $ or $ s=s_2 $. We denote the set of all max-extreme points of S with the symbol $ \mbox{ext}_{\max}\left( S \right) $.
\end{defn}
Thus, a point \( s\in S \) is called a max-extreme point if it cannot be part of a segment within \( S \) unless it is one of the endpoints of that segment. It is important to note that, due to the idempotency of addition, the property in Definition \ref{dfn2} does not imply \( s = \alpha_1 s_1 \oplus \alpha_2 s_2 \) leading to \( s = s_1 \) and \( s = s_2 \).
\begin{rem}
	Let \( S \subseteq \mathbb{R}_{\max}^{n} \) be a max-convex set, and let \( s^{(1)}, s^{(2)}, s \in S \). If \( s  \) is a max-extreme point of \( S \) and \( s = \alpha_1 s^{(1)} \oplus \alpha_2 s^{(2)} \) with \( \alpha_1 \oplus \alpha_2 = 1 \), then
	\[
	\left(s = s^{(1)}, \alpha_1 = 1\right) \quad \text{or} \quad \left(s = s^{(2)}, \alpha_2 = 1\right).
	\]
	Suppose \( s = s^{(1)} \) but \( \alpha_1 < 1 \). Consequently, \( \alpha_2 = 1 \). Assume by contradiction, \( s \neq s^{(2)} \). Then, there exists an index \( i \) with \( 1 \leq i \leq n \) such that \( s^{(2)}_i < s_i \). However, we have
	\[
	s_i = \alpha_1 s^{(1)}_{i} \oplus\alpha_2 s^{(2)}_{i} =\alpha_1 s_i\oplus s^{(2)}_i \ < s_i\oplus s_i=s_i,
	\]
	which is a contradiction and the statement holds. This observation also is valid for the set \( M_{n} \left( \mathbb{R}_{\max} \right) \).
\end{rem}
\begin{exam}
	Let $ S= \left( \left\{ 3 \right\} \times \left[1, 3 \right] \right) \cup \left\{ \left(x_1,x_2\right)^T\in\mathbb{R}^2|~x_1+2x_2\geq 7,~ x_1,x_2\leq 3  \right\}$ be a subset in $ \zR^{2}_{\max} $. We can verify that $ S $ is a max-convex set and
	$$  \mbox{ext}_{\max}\left( S \right)=\{ \left(3,1\right)^T \} \cup \left\{ \left(x_1,x_2\right)^T\in\mathbb{R}^2|~x_1+2x_2=7,~ x_1<3,~ x_2\leq 3 \right\}.$$
\begin{figure}[h]
		\centering
		\begin{minipage}{.4\textwidth}
			\centering
			\includegraphics[width=.7\linewidth]{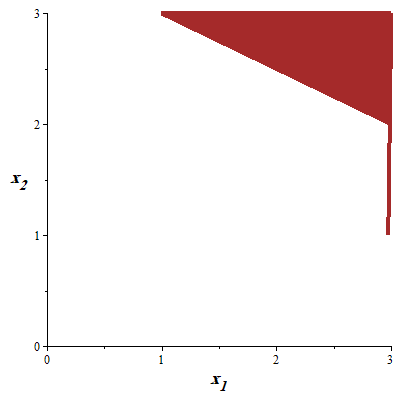}
			\captionof{figure}{{\small Set of $S$}}
			\label{fig:5}
		\end{minipage}%
		\begin{minipage}{.4\textwidth}
			\centering
			\includegraphics[width=.7\linewidth]{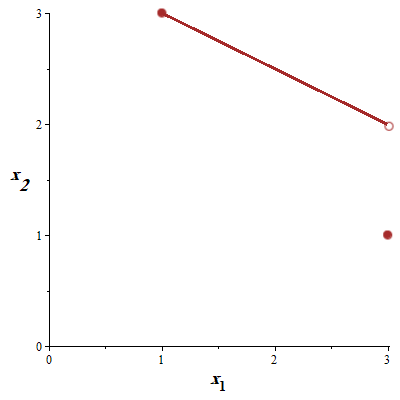}
			\captionof{figure}{{\small Set of $\mbox{ext}_{\max}\left( S \right)$}}
			\label{fig:6}
		\end{minipage}
	\end{figure}
\end{exam}
Definitions \ref{dfn1} and \ref{dfn2} provide specific criteria for identifying max-extreme points in sets within $ \zR_{\max}^{n}$ and ($M_{n}\left( \zR _{\max} \right)$). These definitions serve as valuable tools for characterizing and distinguishing max-extreme points in the context of max-convex analysis.
\begin{thm}\label{lem1.2}
	The set of max-doubly stochastic matrices is max-convex.
\end{thm}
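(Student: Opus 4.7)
The plan is to verify Definition \ref{dfn1} directly. I take arbitrary $D_1, D_2 \in MDS_n(\mathbb{R}_{\max})$ and scalars $\alpha_1, \alpha_2 \in \mathbb{R}_{\max}$ with $\alpha_1 \oplus \alpha_2 = 1$, and I must show that $D := \alpha_1 D_1 \oplus \alpha_2 D_2$ lies in $MDS_n(\mathbb{R}_{\max})$, i.e.\ that $D \otimes \mathbb{1} = \mathbb{1}$ and $\mathbb{1}^T \otimes D = \mathbb{1}^T$.

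The main computational step is to exploit the distributivity of $\otimes$ over $\oplus$ (which holds in $\mathbb{R}_{\max}$) together with the fact that scalar multiplication commutes with the matrix product. Concretely, I would write
\[
D \otimes \mathbb{1} = \bigl(\alpha_1 D_1 \oplus \alpha_2 D_2\bigr) \otimes \mathbb{1} = \alpha_1 (D_1 \otimes \mathbb{1}) \oplus \alpha_2 (D_2 \otimes \mathbb{1}) = \alpha_1 \mathbb{1} \oplus \alpha_2 \mathbb{1} = (\alpha_1 \oplus \alpha_2)\mathbb{1} = \mathbb{1},
\]
using the max-row stochasticity of $D_1$ and $D_2$ (Definition \ref{def2.1}) and the constraint $\alpha_1 \oplus \alpha_2 = 1$. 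The computation for columns is completely symmetric: applying $\mathbb{1}^T$ on the left, distributing, and using max-column stochasticity yields $\mathbb{1}^T \otimes D = \mathbb{1}^T$.

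There is essentially no obstacle here; the verification is mechanical once distributivity is in hand. One small point worth flagging in the write-up is that the hypothesis $\alpha_1 \oplus \alpha_2 = 1$ forces $\alpha_1, \alpha_2 \leq 1$, so the scalar max-convex combination collapses neatly to $\mathbb{1}$ (and the entries of $D$ remain in $[0,1]$, consistent with Proposition \ref{rem3}). Thus the argument is just a direct unwinding of the definitions, and no auxiliary lemma beyond Definition \ref{def2.1} and the semiring distributive law is required.
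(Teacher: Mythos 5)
Your proposal is correct and follows essentially the same route as the paper: distribute $\otimes$ over $\oplus$, use the row-stochasticity of $D_1$ and $D_2$ to reduce $D \otimes \mathbb{1}$ to $(\alpha_1 \oplus \alpha_2)\mathbb{1} = \mathbb{1}$, and argue symmetrically for columns. The additional remark about $\alpha_1, \alpha_2 \leq 1$ is harmless but not needed for the argument.
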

\begin{proof}		
	To prove the max-convexity of the set of max-doubly stochastic matrices, it is sufficient to verify that the condition stated in Definition \ref{dfn1} is satisfied. Consider two matrices \( D_1,D_2\in MDS_n(\mathbb{R}_{\max}) \) and let \( \alpha_1, \alpha_2 \in \mathbb{R}_{\max} \) such that \( \alpha_1 \oplus \alpha_2 = 1 \). Since both \( D_1 \) and \( D_2 \) are max-row stochastic matrices, we have the following:
	\begin{align*}
	\left( \alpha_1 D_1\oplus \alpha_2 D_2  \right)\otimes\mathbb{1} &= \alpha_1 \left( D_1\otimes\mathbb{1}\right) \oplus \alpha_2 \left( D_2\otimes\mathbb{1} \right)\\
	&=\alpha_1\mathbb{1}\oplus\alpha_2\mathbb{1}\\
	&= \left(\alpha_1\oplus\alpha_2\right)\mathbb{1}\\
	&=\mathbb{1}.
\end{align*}
	Thus, \( \alpha_1 D_1 \oplus \alpha_2 D_2 \) satisfies the condition for being a max-row stochastic matrix. Similarly, the same reasoning can be applied to show that $ \alpha_1 D_1\oplus \alpha_2 D_2 $ is a max-column stochastic matrix. Therefore $ \alpha_1 D_1 \oplus \alpha_2 D_2\in MDS_n(\mathbb{R}_{\max})  $ and this means that the set of max-doubly stochastic matrices is max-convex.
\end{proof}
According to \cite{butkovich}, the identity matrix and permutation matrix in \( M_n(\mathbb{R}_{\max}) \) are defined as follows:
\begin{defn}
	For \( I_n, P \in M_n(\mathbb{R}_{\max}) \):
	\begin{enumerate}
		\item The matrix \( I_n \) is called the max-identity matrix (or simply the identity matrix) if its diagonal entries are \( 1 \) and all off-diagonal entries are \( 0 \).
		\item The matrix \( P \) is called a max-permutation matrix (or permutation matrix) if it is obtained from the identity matrix by permuting its rows and/or columns.
	\end{enumerate}
\end{defn}
It is evident that every permutation matrix is a max-doubly stochastic matrix.
Moreover, multiplying a permutation matrix on the left side of an arbitrary matrix in \( M_n(\mathbb{R}_{\max}) \) permutes its rows, while multiplying it on the right side permutes its columns. From this observation, we conclude that the product of a finite number of permutation matrices is itself a permutation matrix. We denote the set of all permutation matrices of size \( n \) by \( \mathbf{P}_{n}(\mathbb{R}_{\max}) \).
Finally, a \( \mathbf{(0,1)} \)\textbf{-matrix} is defined as a matrix whose entries are exclusively 0 and 1.

In the following theorem and lemma, we identify the max-extreme points of the set \( MDS_n(\mathbb{R}_{\max}) \).
\begin{lem}\label{lem5}
	Every max-extreme point of $ MDS_n(\mathbb{R}_{\max}) $ is a $(0,1)$-matrix.
	\begin{proof}
		We will prove this by contradiction. Assume that there exists a max-extreme point of the set \( MDS_n(\mathbb{R}_{\max}) \) such $ E=\left( e_{i,j} \right)_{n\times n}  $ with an entry \( e_{i_0,j_0} = \alpha \), where \( 0 < \alpha < 1 \). We can express \( E \) as	
		\[  
		E = D_1 \oplus \alpha D_2,  
		\]
		where \( D_1 \) is derived from \( E \) by replacing \( e_{i_0,j_0} \) with \( 0 \) (i.e., \( d^{(1)}_{i_0,j_0} = 0 \)) and  \( D_2 \) is derived from $ E $ by replacing \( e_{i_0,j_0} \) with $ 1 $ (i.e., \( d^{(2)}_{i_0,j_0} = 1 \)).
		It is clear that both matrices \( D_1 \) and \( D_2 \) are max-doubly stochastic matrices and \( E \) is a max-convex combination of $ D_1 $ and $ D_2 $. This contradicts the assumption that $ E $ is a max-extreme point of the set \( MDS_n(\mathbb{R}_{\max}) \).  Thus, the max-extreme points of the set of max-doubly stochastic matrices contain only zero and one entries.
	\end{proof}
\end{lem}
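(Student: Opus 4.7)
The plan is a proof by contradiction. Suppose $E = (e_{i,j}) \in MDS_n(\mathbb{R}_{\max})$ is a max-extreme point that is not a $(0,1)$-matrix. Then there exists a position $(i_0, j_0)$ with $e_{i_0, j_0} = \alpha$ for some $\alpha \in (0,1)$. I will construct two distinct matrices $D_1, D_2 \in MDS_n(\mathbb{R}_{\max})$, neither equal to $E$, together with scalars summing to $1$ under $\oplus$, such that $E = 1 \cdot D_1 \oplus \alpha \cdot D_2$, which contradicts the definition of max-extreme point.

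The natural candidates are $D_1$ and $D_2$, obtained from $E$ by altering only the $(i_0, j_0)$ entry: set $(D_1)_{i_0, j_0} = 0$ and $(D_2)_{i_0, j_0} = 1$, while keeping every other entry equal to $e_{i,j}$. The crucial step is verifying that both matrices remain max-doubly stochastic. For $D_2$, Proposition \ref{rem3} gives $e_{i,j} \leq 1$ for every entry, and lifting $(i_0, j_0)$ to $1$ keeps every row and column maximum equal to $1$. For $D_1$, the key observation is that since $e_{i_0, j_0} = \alpha < 1$ and the maximum of row $i_0$ (respectively column $j_0$) of $E$ equals $1$, this maximum must be attained at some position other than $(i_0, j_0)$. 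Hence zeroing out that single entry preserves both $D_1 \otimes \mathbb{1} = \mathbb{1}$ and $\mathbb{1}^T \otimes D_1 = \mathbb{1}^T$, so $D_1 \in MDS_n(\mathbb{R}_{\max})$.

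The decomposition identity is then checked entrywise. Off the position $(i_0, j_0)$, both $D_1$ and $D_2$ agree with $E$, and so the $(i,j)$-entry of $1 \cdot D_1 \oplus \alpha \cdot D_2$ equals $\max(e_{i,j}, \alpha \, e_{i,j}) = e_{i,j}$. At the position $(i_0, j_0)$, it equals $\max(0, \alpha \cdot 1) = \alpha = e_{i_0, j_0}$. Since $1 \oplus \alpha = 1$, this is a legitimate max-convex combination, and both $D_1 \neq E$ (differing at $(i_0, j_0)$, namely $0 \neq \alpha$) and $D_2 \neq E$ (differing at $(i_0, j_0)$, namely $1 \neq \alpha$) hold. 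This contradicts $E \in \mathrm{ext}_{\max}(MDS_n(\mathbb{R}_{\max}))$.

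The main obstacle is the verification that $D_1$ is still in $MDS_n(\mathbb{R}_{\max})$ after zeroing out an off-maximum entry; everything else is bookkeeping. This hinges on the simple but essential observation, implicit in the definition combined with Proposition \ref{rem3}, that every row and column of a max-doubly stochastic matrix must contain at least one entry equal to $1$. Given this, any strictly subunit entry is redundant for achieving the row and column maxima and can be set to zero without destroying max-double-stochasticity.
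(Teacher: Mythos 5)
Your proposal is correct and follows essentially the same route as the paper: the identical decomposition $E = D_1 \oplus \alpha D_2$ with $D_1$, $D_2$ obtained by replacing the entry $e_{i_0,j_0}$ by $0$ and $1$ respectively. You merely supply the verification (that zeroing a strictly subunit entry, or raising it to $1$, preserves max-double-stochasticity) that the paper dismisses as clear, which is a welcome but not substantive addition.
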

\begin{thm}\label{theorem1}
	The matrix \( E =\left( e_{i,j} \right)_{n\times n}\in MDS_n(\mathbb{R}_{\max}) \) is a max-extreme point of the set \( MDS_n(\mathbb{R}_{\max}) \) if and only if \( E \) is a \( (0,1) \)-matrix and there is at most one entry $e_{i,j}=1$ such that $E$ remains max-doubly stochastic matrix after changing $e_{i,j}$ to a number $0\leq r<1$.
	\begin{proof}
		Suppose \( E = \left( e_{i,j} \right)_{n \times n} \in MDS_n(\mathbb{R}_{\max}) \) is a max-extreme point of the set of max-doubly stochastic matrices. By Lemma \ref{lem5}, \( E \) is a \( (0,1) \)-matrix. 
		To prove the second condition, assume for contradiction that \( E \) has at least two entries equal to one where if these entries are changed to a number $0\leq r<1$, then $ E $ is still a max-doubly stochastic matrix.
		Without losing any generality, tha matrix \( E \) has entries  \( e_{i_1,j_1}\) and \( e_{i_2,j_2} \) equal to one such that $E$ remains max-doubly stochastic matrix after changing these intries to a number $0\leq r<1$.
		 Let
		\( D_1=\left(d_{i,j}^{(1)}\right)_{n\times n}, D_2=\left(d_{i,j}^{(2)}\right)_{n\times n} \in MDS_n(\mathbb{R}_{\max}) \) be matrices obtained from \( E \) that $ D_1 $ is the matrix $ E $ with replacing \( e_{i_1,j_1} \) with \( r_1 \) (i.e., \( 0<d^{(1)}_{i_1,j_1} = r_1 <1\)) and the matrix $ D_2 $ is obtained in a similar way, too (i.e., \( 0<d^{(2)}_{i_2,j_2} = r_2 <1\)). \( E = D_1 \oplus D_2 \) because \( e_{i_1,j_1} = \max(1, r_1) = 1 \) and \( e_{i_2,j_2} = \max(1, r_2) = 1 \). Consequently, \( E \) is a max-convex combination of \( D_1 \) and \( D_2 \). This contradicts the assumption that \( E \) is a max-extreme point of \( MDS_n(\mathbb{R}_{\max}) \).
		Thus, \( E \) consists only of entries \( 0 \) and \( 1 \), and changing at most one entry equal to \( 1 \) to \( 0 \leq r < 1 \) still results in a max-doubly stochastic matrix.
		
		Conversely, suppose \( E \in MDS_n(\mathbb{R}_{\max}) \) is a \( (0,1) \)-matrix and changing at most one entry equal to \( 1 \) to \( 0 \leq r < 1 \) still results in a max-doubly stochastic matrix, we will prove that $ E $ is a max-extreme point of the set \( MDS_n(\mathbb{R}_{\max}) \).
		Assume \( E \) can be expressed as a max-convex combination of matrices in \( MDS_n(\mathbb{R}_{\max}) \), i.e., \( E = \alpha_1 D_1 \oplus \alpha_2 D_2 \), where \( \alpha_1 \) and \( \alpha_2 \) are non-negative scalars with \( \alpha_1 \oplus \alpha_2 = 1 \), and \( D_1=\left(d_{i,j}^{(1)}\right)_{n\times n}, D_2=\left(d_{i,j}^{(2)}\right)_{n\times n} \in MDS_n(\mathbb{R}_{\max}) \).
		If $ \alpha_1=0 $ or $ \alpha_2=0 $ then the conclusion is trivial. So we can assume $ \alpha_1,\alpha_2>0 $.
		Based on the following reasons, the only possibility is \( ( \alpha_1 = 1,E = D_1) \) or \( ( \alpha_2 = 1,E = D_2) \):
		\begin{itemize}
			\item
			\( D_1 \) and \( D_2 \) must have zeros in the zero positions of \( E \), since \( E \geq \alpha_1 D_1, E\geq \alpha_2 D_2\).
			\item 
			Consider the entries of \( E \) that are equal to \( 1 \). If \( 0 < \alpha_1 < 1 \), then \( \alpha_2 = 1 \). Consequently, \( D_2 \) must have the same positions of entries equal to \( 1 \) as \( E \), since by Proposition \ref{rem3}, all entries of the matrix \( D_1 \) and $ D_2 $ lie between \( 0 \) and \( 1 \), and the matrix \( E \) is the max-convex combination of \( D_1 \) and \( D_2 \). By considering the previous part, this implies that \( E = D_2 \). Similarly, for \( 0 < \alpha_2 < 1 \), it can be shown that \( E = D_1 \).
			Now, suppose \( \alpha_1 = \alpha_2 = 1 \). We claim that \( E = D_1 \) or \( E = D_2 \). Assume, for contradiction, that \( E \neq D_1 \) and \( E \neq D_2 \). Since the entries of \( D_1 \) and \( D_2 \) are zero at the zero positions of \( E \), the condition \( E \neq D_1 \) implies the existence of some \( 1 \leq i, j \leq n \) such that \( e_{i,j} = 1 \) but \( d_{i,j}^{(1)} = r \) with \( 0 \leq r < 1 \). Similarly, \( E \neq D_2 \) implies the existence of some positions \( (i, j) \) where \( e_{i,j} = 1 \) and \( d^{(2)}_{i,j} = r \) with \( 0 \leq r< 1 \).
			Without loss of generality, suppose there exist indices $ (i_1,j_1) $ and $ (i_2,j_2) $ such that $ e_{i_1,j_1}=e_{i_2,j_2} = 1 $ and $ d^{(1)}_{i_1,j_1} = r_1 $, $ d^{(2)}_{i_2,j_2} = r_2 $ with $ 0\leq r_1,r_2<1 $.
			Define the matrix \( E_1 \) by replacing the \( (i_1, j_1) \)-th entry of \( E \) with \( r_1 \), i.e., \( e^{(1)}_{i_1,j_1} = r_1 \). Then, we have \( D_1 \leq E_1 \leq E \), and by Theorem \ref{theorem6}, \( E_1 \in MDS_n(\mathbb{R}_{\max}) \). Similarly, define \( E_2 \) by replacing \( e_{i_2,j_2} \) in \( E \) with \( r_2 \), i.e., \( e^{(2)}_{i_2,j_2} = r_2 \). Again, we obtain \( D_2 \leq E_2 \leq E \) and \( E_2 \in MDS_n(\mathbb{R}_{\max}) \).  
			This gives there are two entries equal to $ 1 $ in \( E \) that can be changed into values $ 0\leq r<1 $ and the result would still be a max-doubly stochastic matrix, which contradicts over assumption for $ E $.			
		\end{itemize}
	\end{proof}
\end{thm}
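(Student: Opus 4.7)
The plan is to prove both directions separately, relying on Lemma~\ref{lem5}, Theorem~\ref{theorem6}, and Proposition~\ref{rem3}.

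For the forward direction, suppose $E$ is a max-extreme point. The first claim, that $E$ is a $(0,1)$-matrix, is immediate from Lemma~\ref{lem5}. To establish the uniqueness condition I would argue by contradiction: assume there exist two distinct positions $(i_1,j_1)$ and $(i_2,j_2)$ with $e_{i_1,j_1}=e_{i_2,j_2}=1$ such that replacing either with some $r_k\in[0,1)$ independently yields a matrix still in $MDS_n(\mathbb{R}_{\max})$. Call these matrices $D_1$ and $D_2$. Because $D_1$ agrees with $E$ at $(i_2,j_2)$ (where both equal $1$) and $D_2$ agrees with $E$ at $(i_1,j_1)$, taking the entrywise maximum recovers $E$, that is $E=D_1\oplus D_2$ with trivial scalars $\alpha_1=\alpha_2=1$. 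Neither $D_1$ nor $D_2$ equals $E$ at the altered entry, so this is a non-trivial max-convex combination, contradicting extremity.

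For the converse, assume $E$ is a $(0,1)$-matrix satisfying the uniqueness condition, and suppose $E=\alpha_1 D_1\oplus\alpha_2 D_2$ with $\alpha_1\oplus\alpha_2=1$ and $D_1,D_2\in MDS_n(\mathbb{R}_{\max})$. The goal is to force $E=D_1$ or $E=D_2$. The first observation is structural: since $\alpha_i D_i\le E$ and Proposition~\ref{rem3} bounds all entries of $D_i$ by $1$, the matrices $D_1$ and $D_2$ must vanish at every zero-position of $E$, so the only freedom is at the $1$-positions. I would then split into cases on whether $\alpha_1$ or $\alpha_2$ equals $1$. If exactly one of them is strictly less than $1$, say $\alpha_1<1$ and $\alpha_2=1$, then at every $1$-position $(i,j)$ we must have $d^{(2)}_{i,j}=1$ (otherwise $\max(\alpha_1 d^{(1)}_{i,j},\alpha_2 d^{(2)}_{i,j})<1$), forcing $D_2=E$.

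The main obstacle is the case $\alpha_1=\alpha_2=1$, where idempotency prevents the immediate identification of $E$ with $D_1$ or $D_2$. Here I would argue again by contradiction: if $E\neq D_1$ and $E\neq D_2$, there must exist positions $(i_1,j_1)$ with $d^{(1)}_{i_1,j_1}=r_1<1=e_{i_1,j_1}$ and $(i_2,j_2)$ with $d^{(2)}_{i_2,j_2}=r_2<1=e_{i_2,j_2}$; the condition $E=D_1\oplus D_2$ at $(i_1,j_1)$ forces $d^{(2)}_{i_1,j_1}=1$, so in particular $(i_1,j_1)\neq(i_2,j_2)$. Define $E_1$ and $E_2$ from $E$ by lowering the respective entries to $r_1$ and $r_2$. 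Then the sandwich inequalities $D_1\le E_1\le E$ and $D_2\le E_2\le E$ combined with Theorem~\ref{theorem6} place both $E_1$ and $E_2$ in $MDS_n(\mathbb{R}_{\max})$, exhibiting two distinct reducible $1$-entries of $E$ and contradicting the hypothesis. This completes the converse, with the delicate step being precisely the exploitation of Theorem~\ref{theorem6} to convert entrywise order into membership in $MDS_n(\mathbb{R}_{\max})$.
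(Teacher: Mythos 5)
Your proposal is correct and follows essentially the same route as the paper's own proof: Lemma~\ref{lem5} for the $(0,1)$ structure, the decomposition $E=D_1\oplus D_2$ for the forward contradiction, and the case split on $\alpha_1,\alpha_2$ together with the sandwich argument via Theorem~\ref{theorem6} for the converse. The only (welcome) refinement is that you explicitly verify $(i_1,j_1)\neq(i_2,j_2)$ in the $\alpha_1=\alpha_2=1$ case, a point the paper passes over with a "without loss of generality."
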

Suppose \( A = \left( a_{i,j} \right)_{m \times n} \) is a \( (0,1) \)-matrix in max algebra. An entry \( a_{i,j} \) is called a \textbf{row singleton entry} if it is the only non-zero element in the \( i \)-th row of the matrix \( A \). Similarly, \( a_{i,j} \) is a \textbf{column singleton entry} if it is the only non-zero element in the \( j \)-th column of the matrix \( A \). An entry \( a_{i,j} \) that is either a row singleton entry or a column singleton entry is referred to as a \textbf{singleton entry} of matrix \( A \).
An entry equal to 1 that is not a singleton is referred to as a \textbf{non-singleton entry}.

The \textbf{direct sum} of two matrices \( A \) and \( B \) in max algebra, denoted by \( A \boxplus B \), is defined as the block matrix:
\[
A \boxplus B = \begin{pmatrix}
	A & 0 \\
	0 & B
\end{pmatrix},
\]
where \( A \) is an \( m \times n \) matrix and \( B \) is a \( t \times k \) matrix in max algebra. The zero blocks have dimensions \( m \times k \) and \( t \times n \), respectively. The vector \( \mathbb{1}_{m} \) is the column vector of size \( m \) with all elements equal to one.

Based on these definitions, Theorem \ref{theorem1} can be restated as follows: The max-extreme points of the set $ MDS_n(\mathbb{R}_{\max}) $, are $(0,1)$-matrices which every non-zero entry is a singleton entry, or $(0,1)$-matrices that have exactly one non-singleton entry equal to one. The following lemma gives the explicit form of these two types of $(0,1)$-matrices, even in the more general case of non-square matrices.

\begin{lem}\label{theorem4}
	Consider the following two types of  matrices in max algebra:
	\begin{itemize}
		\item \textbf{(Type 1):} The \( (0,1) \)-matrix $A$ of size $m\times n$ in max algebra, satisfying $\mathbb{1}_{m}^T\otimes A=\mathbb{1}_{n}^T,~A\otimes\mathbb{1}_{n}=\mathbb{1}_{m}$, where every non-zero entry in $A$ is singleton.
		\item \textbf{(Type 2):} The \( (0,1) \)-matrix $B$ of size $m\times n$ in max algebra, satisfying $\mathbb{1}_{m}^T\otimes B=\mathbb{1}_{n}^T,~B\otimes\mathbb{1}_{n}=\mathbb{1}_{m}$, where $B$ has exactly one non-singleton entry.
	\end{itemize}
	Then, the matrix $ A $ can be expressed as  $ P_1\otimes E_1\otimes P_2 $ and the matrix $ B $ as $ P_1\otimes E_2\otimes P_2 $, where $ P_1\in\mathbf{P}_{m}(\mathbb{R}_{\max}) $, $ P_2\in\mathbf{P}_{n}(\mathbb{R}_{\max}) $ and $ E_1 $ and $ E_2 $ have the following structure:
	\begin{equation}\label{rabete4}
		E_1= \mathbb{1}_{ m_{1}}\boxplus\cdots\boxplus \mathbb{1}_{ m_{k}}\boxplus \mathbb{1}^T_{ n_{1}}\boxplus \cdots \boxplus \mathbb{1}^T_{ n_{t}},
	\end{equation}
	for $ t,k\geq 0 $,  and
	\begin{equation}\label{rabete5}
		E_2=\begin{pmatrix} 
			1 & 1 & \cdots & 1  \\
			1 & 0 & \cdots & 0 \\
			\vdots & \vdots & \ddots & \vdots \\
			1 & 0 & \cdots & 0
		\end{pmatrix}_{q\times r} \boxplus \mathbb{1}_{ m_{1}}\boxplus\cdots\boxplus \mathbb{1}_{ m_{k}}\boxplus \mathbb{1}^T_{ n_{1}}\boxplus \cdots \boxplus \mathbb{1}^T_{ n_{t}},
	\end{equation}
	for $t,k\geq 0$ and $ 1<q,r$.
\begin{proof}
	Suppose \( A = \left( a_{i,j} \right)_{m \times n} \) is an $m\times n$ matrix of (Type 1). By applying suitable permutations to the rows and columns of \( A \), assume $ a_{1,1}=1 $. One of the following three situations occurs:
	\begin{enumerate}
		\item $ a_{1,1} $ is a row singelton:
		By applying suitable permutations to the rows of \( A \), assume:
		\[
		a_{1,1} = a_{2,1} = \cdots = a_{m_1,1} = 1,
		\]
		\[
		a_{m_1 + 1, 1} = a_{m_1 + 2, 1} = \cdots = a_{m, 1} = 0.
		\]
		Since \( A \) contains no non-singleton entries, all \( a_{i,1} \) (\( 1 \leq i \leq m_1 \)) must be row singletons. Consequently, the remaining entries in the first \( m_1 \) rows of \( A \) are zero. Thus, \( A \) can be expressed as $ P_1\otimes\left(\mathbb{1}_{m_1}\boxplus A^{(1)}\right)\otimes P_2,$
		where \( A^{(1)}=\left(a^{(1)}_{i,j}\right)_{(m-m_1)\times (n-1)} \) is a \( (0,1) \)-matrix in (Type 1) and \( P_1 \in \mathbf{P}_m(\mathbb{R}_{\max}) \), \( P_2 \in \mathbf{P}_n(\mathbb{R}_{\max}) \).
		\item $ a_{1,1} $ is a column singelton:
		 after applying appropriate permutations to the columns of \( A \) besides the first column, we can assume:
		\[
		a_{1,1} = a_{1,2} = \cdots = a_{1,n_1} = 1,
		\]
		\[
		a_{1,n_1 + 1} = a_{1,n_1 + 2} = \cdots = a_{1, n} = 0.
		\]
		All \( a_{1,j} \) (\( 1 \leq j \leq n_1 \)) must be column singletons because \( A \) does not have any non-singleton entries. consequently all of the remaining entries in the first $n_1$ column are zero. Thus $A$ can be expressed as $ P_1\otimes\left(\mathbb{1}^T_{n_1}\boxplus A^{(1)}\right)\otimes P_2 $, where \( A^{(1)}=\left(a^{(1)}_{i,j}\right)_{(m-1)\times (n-n_{1})}\) is a \((0,1)\)-matrix belonging to the first type, and $ P_1\in\mathbf{P}_{m}(\mathbb{R}_{\max}) $ and $ P_2\in\mathbf{P}_{n}(\mathbb{R}_{\max}) $.
		\item  $ a_{1,1} $ is both a row and a column singelton:
		it is clear that $A$ can be expressed as $ P_1\otimes\left(\mathbb{1}_1\boxplus A^{(1)}\right)\otimes P_2 $, where \( A^{(1)}=\left(a^{(1)}_{i,j}\right)_{(m-1)\times (n-1)} \) is a \((0,1)\)-matrix belonging to (Type 1), and $ P_1\in\mathbf{P}_{m}(\mathbb{R}_{\max}) $ and $ P_2\in\mathbf{P}_{n}(\mathbb{R}_{\max}) $.
	\end{enumerate}
	By using induction, after applying permutations to the rows and columns of \( A^{(1)} \), the entry $ a^{(1)}_{1,1} $ becomes $ 1 $ and one of the three above states occurs again.
	By continuing this process and using appropriate permutation matrices, finally, we can consider the matrix $ A $ as $ P_1\otimes E_1\otimes P_2 $ where, \( P_1 \in \mathbf{P}_m(\mathbb{R}_{\max}) \) and \( P_2 \in \mathbf{P}_n(\mathbb{R}_{\max}) \) and $ E_1 $ has the structure described in relation \ref{rabete4}.
	
	Now, Let $ B= \left( b_{i,j} \right)_{m \times n} $ be a matrix belonging to (Type 2). By applying the appropriate permutation to the rows and columns of the matrix $ B $, we can assume that $ b_{1,1}=1 $ is the only non-singleton entry. By using appropriate row and column permutations, we can assume that:
	\[
	b_{1,1}=b_{1,2}=\cdots=b_{1,r}=1,~~ b_{1,r+1}=\cdots=b_{1,n}=0,
	\]
	\[
	b_{1,1}=b_{2,1}=\cdots=b_{q,1}=1,~~ b_{q+1,1}=\cdots=b_{m,1}=0,
	\]
	where $ 1<q\leq m $ and $ 1< r\leq n $. Since we have more than one entries equal to $ 1 $ in the first row, all of these  are column singleton entries, and so in the columns $ 2 $ to $ r $, the remaining entries of the matrix $ B $ are equal to zero. Similarly, all entries $ b_{i,1} $ ($ 2\leq i\leq q $) are row singleton, and so all other entries in the rows $ 2 $ to $ q $, are equal to zero. Hence there are matrices $ B^{(1)} $ and $ B^{(2)} $, such that $ B^{(2)} $ is a matrix of size $ q\times r $ and $ B^{(1)} $ is a matrix of size $ (m-q)\times (n-r) $ in max algebra, and  permutation matrices \( P_1,P_2\) of suitable size, such that
		\begin{equation*}
		B^{(2)}=\begin{pmatrix} 
			1 & 1 & \cdots & 1  \\
			1 & 0 & \cdots & 0 \\
			\vdots & \vdots & \ddots & \vdots \\
			1 & 0 & \cdots & 0
		\end{pmatrix}_{q\times r},
	\end{equation*}
	where $ 1<q\leq m $, $ 1< r\leq n $ and
	\[
	B=P_1\otimes \left(B^{(2)}\boxplus B^{(1)}\right)\otimes P_2.
	\]
	It is easily to see that $ B^{(1)} $ is a matrix of the first type. By considering the previous part and applying appropriate row and column permutations, the matrix $ B^{(1)} $ can be written in the form of the relation \ref{rabete4}.  Finally, we can represent the matrix $ B $ as $P_1\otimes E_2 \otimes P_2$,	where \( P_1 \in \mathbf{P}_{m}(\mathbb{R}_{\max}) \), \( P_2 \in \mathbf{P}_{n}(\mathbb{R}_{\max}) \) and the matrix $ E_2 $ is of form \ref{rabete5}.
\end{proof}
\end{lem}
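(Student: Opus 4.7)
The plan is to proceed by induction on $m+n$, exploiting the rigidity that the singleton condition forces on the matrix structure. For Type 1 matrices, I first observe that the conditions $A\otimes \mathbb{1}_n=\mathbb{1}_m$ and $\mathbb{1}_m^T\otimes A=\mathbb{1}_n^T$ guarantee that every row and every column contains at least one entry equal to $1$, so I may apply row and column permutations $P_1, P_2$ to place a non-zero entry at position $(1,1)$.

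Then I split into three cases based on whether $a_{1,1}$ is a row singleton only, a column singleton only, or both simultaneously. In the row-singleton-only case, any additional non-zero entry $a_{i,1}$ in column $1$ shares its column with $a_{1,1}$, so it cannot be a column singleton; the Type 1 hypothesis then forces it to be a row singleton as well. Permuting rows to collect all non-zero entries of column $1$ at the top yields an isolated $\mathbb{1}_{m_1}$ block in the top-left, with the remainder being a smaller $(m-m_1)\times(n-1)$ matrix. The column-singleton-only case is symmetric and produces a $\mathbb{1}_{n_1}^T$ block, while the case where $a_{1,1}$ is both yields a $\mathbb{1}_1$ block. Induction then assembles the direct-sum decomposition of the form \eqref{rabete4}.

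For Type 2 matrices, I locate the unique non-singleton entry and, after applying suitable permutations, place it at position $(1,1)$. Since $a_{1,1}$ is non-singleton, its row and column each contain more than one non-zero entry. Every other non-zero entry in row $1$ must therefore be a column singleton (it shares a row with $a_{1,1}$, so it cannot be a row singleton), and symmetrically every other non-zero entry in column $1$ must be a row singleton. Permuting rows and columns to gather these entries adjacent to $(1,1)$ produces the $q\times r$ L-shaped block in the top-left, while all other entries in those first $q$ rows and first $r$ columns are forced to be zero by the singleton property just derived. The remaining $(m-q)\times(n-r)$ submatrix inherits the row/column max-sum conditions and contains no non-singleton entries, so it qualifies as a Type 1 matrix and the Type 1 decomposition applies to yield the block structure in \eqref{rabete5}.

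The main obstacle is verifying carefully that each reduction preserves the hypotheses: the extracted block must have its non-zero entries entirely disjoint from the non-zero entries of the rest of the matrix, so that the remaining rows and columns still satisfy $A'\otimes\mathbb{1}=\mathbb{1}$ and $\mathbb{1}^T\otimes A'=\mathbb{1}^T$, and so that singleton entries of $A$ (or $B$) correspond to singleton entries of the smaller submatrix. This disjointness is exactly what the case analysis establishes, since in every case the rows and columns absorbed into the extracted block contain no further non-zero entries outside that block.
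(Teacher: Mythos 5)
Your proposal is correct and follows essentially the same route as the paper: the same three-case analysis on whether the pivot entry is a row singleton, a column singleton, or both for Type 1, and the same extraction of the L-shaped $q\times r$ block around the unique non-singleton entry for Type 2, followed by induction on the remaining submatrix. Your explicit framing as induction on $m+n$ and your remark about verifying that the reduction preserves the row/column conditions only make the paper's "continue this process" step slightly more precise.
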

Using Theorem \ref{theorem1} and Lemma \ref{theorem4}, we can derive the general form of max-extreme points of $MDS_n(\mathbb{R}_{\max})$.
\begin{thm}\label{extremthm}
	All of the max-extreme points of $ MDS_n(\mathbb{R}_{\max}) $, are of the form $ P_1\otimes E_1\otimes P_2 $ or $ P_1\otimes E_2\otimes P_2 $, such that $ P_1,P_2\in\mathbf{P}_{n}(\mathbb{R}_{\max}) $ and $ E_1 $ and $ E_2 $ are of the following forms:
	\begin{equation*}
		E_1= \mathbb{1}_{ m_{1}}\boxplus\cdots\boxplus \mathbb{1}_{ m_{k}}\boxplus \mathbb{1}^T_{ n_{1}}\boxplus \cdots \boxplus \mathbb{1}^T_{ n_{t}},
	\end{equation*}
	where $ t,k\geq 0 $, and
	\begin{equation*}
		E_2=\begin{pmatrix}
			1 & 1 & \cdots & 1  \\
			1 & 0 & \cdots & 0 \\
			\vdots & \vdots & \ddots & \vdots \\
			1 & 0 & \cdots & 0
		\end{pmatrix}_{q\times r} \boxplus \left( \mathbb{1}_{ m_{1}}\boxplus\cdots\boxplus \mathbb{1}_{ m_{k}}\boxplus \mathbb{1}^T_{ n_{1}}\boxplus \cdots \boxplus \mathbb{1}^T_{ n_{t}}\right),
	\end{equation*}
	where $ 1<q, r $ and $t,k\geq 0$.
\end{thm}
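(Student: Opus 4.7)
The plan is to combine Theorem \ref{theorem1} with Lemma \ref{theorem4}, since almost all of the structural work has already been done there. Let $E$ be an arbitrary max-extreme point of $MDS_n(\mathbb{R}_{\max})$. Theorem \ref{theorem1} tells us that $E$ is a $(0,1)$-matrix and that at most one entry of $E$ equal to $1$ can be replaced by some $r \in [0,1)$ with the resulting matrix still being max-doubly stochastic. What I need to do is translate this entrywise ``replaceability'' condition into the singleton/non-singleton vocabulary used by Lemma \ref{theorem4}.

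The key step is the equivalence: an entry $e_{i,j}=1$ of $E$ can be replaced by some $r\in[0,1)$ while keeping the matrix max-doubly stochastic if and only if $e_{i,j}$ is a non-singleton entry. Indeed, if $e_{i,j}$ is a row singleton, then after the replacement the maximum of row $i$ drops to $r<1$, breaking $D\otimes\mathbb{1}=\mathbb{1}$; the column singleton case is symmetric. Conversely, if $e_{i,j}$ is neither a row singleton nor a column singleton, then row $i$ and column $j$ each contain another $1$, so the row and column maxima remain equal to $1$ and the resulting matrix is still max-doubly stochastic by Definition \ref{def2.1}.

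Combining this equivalence with Theorem \ref{theorem1}, $E$ has either no non-singleton entry (so every nonzero entry of $E$ is singleton, placing $E$ in Type 1 of Lemma \ref{theorem4}) or exactly one non-singleton entry (placing $E$ in Type 2). Applying Lemma \ref{theorem4} to the square case $m=n$ now yields $E=P_1\otimes E_1\otimes P_2$ in the first case and $E=P_1\otimes E_2\otimes P_2$ in the second, with $P_1,P_2\in\mathbf{P}_n(\mathbb{R}_{\max})$ and $E_1$, $E_2$ of the stated block forms in \eqref{rabete4} and \eqref{rabete5}.

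The main obstacle is really only the singleton/replaceability equivalence in the second paragraph, which forms the bridge between the local entrywise description of max-extreme points given by Theorem \ref{theorem1} and the global block-structural description provided by Lemma \ref{theorem4}. Once that bridge is in place, the theorem reduces to an invocation of the two earlier results, and the size constraints $\sum m_i+t=\sum n_j+k=n$ (respectively with the $q\times r$ block in the Type 2 case) are automatic from the fact that $E$ is $n\times n$.
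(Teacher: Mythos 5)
Your proposal is correct and follows exactly the route the paper intends: the paper offers no separate proof of this theorem, instead presenting it as the direct combination of Theorem \ref{theorem1} and Lemma \ref{theorem4}, with the remark preceding Lemma \ref{theorem4} playing the role of your ``replaceable iff non-singleton'' bridge. Your explicit verification of that equivalence (a row or column singleton cannot be lowered without breaking $D\otimes\mathbb{1}=\mathbb{1}$ or $\mathbb{1}^T\otimes D=\mathbb{1}^T$, while a non-singleton entry always can be) is precisely the step the paper leaves implicit, so your write-up is, if anything, more complete than the original.
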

\begin{exam}
	According to Theorem \ref{extremthm}, the max-extreme points of the set $  MDS_3(\mathbb{R}_{\max}) $ and $  MDS_2(\mathbb{R}_{\max}) $ are as follows. Let
	{\fontsize{7.5}{.2}
	\[
	\mathcal{P}_1=
	\left\{\begin{pmatrix}
		1 & 0 & 0 \\
		0 & 1 & 0 \\
		0 & 0 & 1	
	\end{pmatrix},~~
	\begin{pmatrix}
		1 & 1 & 0 \\
		1 & 0 & 0 \\
		0 & 0 & 1	
	\end{pmatrix},~~
	\begin{pmatrix}
		0 & 1 & 1 \\
		1 & 0 & 0 \\
		1 & 0 & 0
	\end{pmatrix},~~
	\begin{pmatrix}
		1 & 1 & 1 \\
		1 & 0 & 0 \\
		1 & 0 & 0	
	\end{pmatrix}\right\},
	\]
	}
	and
	\[
		\mathcal{P}_2=
	\left\{\begin{pmatrix}
		1 & 0 \\
		0 & 1 
	\end{pmatrix},~~
	\begin{pmatrix}
		1 & 1  \\
		1 & 0
	\end{pmatrix}\right\},
	\]
	then
	\[
	\mbox{ext}_{\max}\left( MDS_3(\mathbb{R}_{\max}) \right)= \left\{ P_1\otimes E\otimes P_2 ~|~E\in\mathcal{P}_1,~ P_1,P_2\in \mathbf{P}_{3}(\mathbb{R}_{\max})\right\},
	\]
	and
		\[
	\mbox{ext}_{\max}\left( MDS_2(\mathbb{R}_{\max}) \right)= \left\{ P_1\otimes E\otimes P_2 ~|~E\in\mathcal{P}_2,~ P_1,P_2\in \mathbf{P}_{2}(\mathbb{R}_{\max})\right\}.
	\]		
\end{exam}
In the context of doubly stochastic matrices, Birkhoff's theorem (also known as the Birkhoff-von Neumann theorem) states that any doubly stochastic matrix can be written as a convex combination of permutation matrices \cite{batia}. Based on the example above, we observe that even in the case of \( n=2 \), there exist max-extreme points in \( MDS_n(R_{\max}) \) that are not permutation matrices. Thus, Birkhoff's theorem does not hold in this version.

\section{\textbf{Majorization in Max Algebra}}
By utilizing max-doubly stochastic matrices, we establish a framework for majorization within the semiring \( \mathbb{R}_{\max}^n \). Furthermore, we characterize the elements of the set of
$\left\{ x \in \mathbb{R}_{\max}^n \mid x \prec_{\max} y \right\}$
for an arbitrary \( y \in \mathbb{R}_{\max}^n \). To enhance understanding of max-majorization, we provide examples along with key theorems related to this topic.
\begin{defn}\label{def3.3}
Let $ x,y\in\zR _{\max}^{n} $. We say that $x$ is max-majorized by $ y $ in symbol $ x\prec_{\max} y $, if there is a max-doubly stochastic matrix such $ D $ that $ x=D\otimes y $.
\end{defn}
The following theorem demonstrates how vectors are classified according to the definition of max-majorization.
\begin{thm}\label{theorem2}
	For $ x,y\in\zR _{\max}^{n} $, $x$ is max-majorized by $ y $ if and only if
	\[
	 \max_{1\leq i\leq n}x_i=\max_{1\leq i\leq n}y_i  \qquad\mbox{and}\qquad  \min_{1\leq i\leq n}x_i\geq\min_{1\leq i\leq n}y_i.
	\]
\begin{proof}
	Let \( x = \left(x_1, x_2, \dots, x_n\right)^T, y= \left(y_1, y_2, \dots, y_n\right)^T \in \mathbb{R}_{\max}^n \) with \( x \prec_{\max} y \). By Definition \ref{def3.3}, there exists a max-doubly stochastic matrix \( D=\left(d_{i,j}\right)_{1\leq i,j\leq n} \) such that \( x = D \otimes y \). For \( 1 \leq i \leq n \), there exists \( 1 \leq t_i \leq n \) that
	\[
	x_i = \max_{1 \leq j \leq n} \left( d_{i,j} y_j \right)=d_{i,t_i} y_{t_i}\leq y_{t_i}\leq \max_{1 \leq i \leq n} y_i,
	\]
	where the inequality follows from \( d_{i,j} \leq 1 \) by Proposition \ref{rem3}. Thus,
	\[ \max_{1 \leq i \leq n} x_i \leq \max_{1 \leq i \leq n} y_i.\]
	Now, let \( y_h = \max_{1 \leq i \leq n} y_i \). Since \( D \) is a max-doubly stochastic matrix, there must be a coordinate equal to 1 in the \( h \)-th column of \( D \). That is, there exists an index \( 1 \leq k \leq n \) such that \( d_{k,h} = 1 \). Therefore,
	\[
	\max_{1 \leq i \leq n} x_i \geq x_k = \max_{1 \leq j \leq n} \left( d_{k,j} y_j \right) \geq d_{k,h} y_h = y_h = \max_{1 \leq i \leq n} y_i.
	\]
	Combining these results, we conclude that \( \max_{1 \leq i \leq n} x_i = \max_{1 \leq i \leq n} y_i \).
	Now, we aim to prove that \( \min_{1 \leq i \leq n} x_i \geq \min_{1 \leq i \leq n} y_i \). For all \( 1 \leq i \leq n \), we have
	$x_i = \max_{1 \leq j \leq n} \left( d_{i,j} y_j \right).$
	As \( D \) is a max-doubly stochastic matrix, there exists an index \( 1 \leq t_i \leq n \) such that \( d_{i,t_i} = 1 \). Therefore,
	\[
	x_i = \max_{1 \leq j \leq n} \left( d_{i,j} y_j \right) \geq d_{i,t_i} y_{t_i} = y_{t_i} \geq \min_{1 \leq i \leq n} y_i.
	\]
	Since the inequality above holds for all \( 1 \leq i \leq n \), we can conclude that \( \min_{1 \leq i \leq n} x_i \geq \min_{1 \leq i \leq n} y_i \).
		
	Conversely, for \( x, y \in \mathbb{R}^n_{\max} \), suppose that
	\[
	\max_{1 \leq i \leq n} x_i = \max_{1 \leq i \leq n} y_i \quad \text{and} \quad \min_{1 \leq i \leq n} x_i \geq \min_{1 \leq i \leq n} y_i.
	\]
	We aim to prove that \( x \prec_{\max} y \). In other words, we must show that there exists a max-doubly stochastic matrix \( D \) such that \( x = D \otimes y \).
	Let \( \max_{1 \leq i \leq n} x_i = x_k \), \( \max_{1 \leq i \leq n} y_i = y_l \) and \( \min_{1 \leq i \leq n} y_i = y_m \). Define the matrix \( D \) as follows:
	\begin{itemize}
		\item For \( 1 \leq j \leq n \), let \( d_{k,j} = 1 \).
		\item For \( 1 \leq i \leq n \), let \( d_{i,m} = 1 \).
		\item For \( 1 \leq i \leq n \), let \( d_{i,l} = \frac{x_i}{y_l} \).
		\item Set all other entries of \( D \) to zero.
	\end{itemize}
	Note that for all \( 1 \leq i \leq n \), $x_i\leq x_k=y_l$, so $\frac{x_i}{y_l}\leq 1$. Also, $ d_{k,l}=\frac{x_k}{y_l}=1 $ and all of entries in the $k$-th row and $m$-th column of $D$ are equal to $1$. Hence
	\[
	\biggl(D\otimes\mathbb{1}\biggr)_{1\leq i\leq n}=\begin{cases}
		\max\left(1,0,\frac{x_i}{y_l}\right)=1, & 1\leq i \leq n,~ i\neq k, \\
		\max\left(1,1,\cdots,1\right)=1, & i=k.
	\end{cases}
	\]
	This means that $ D $ is a max-row stochastic matrix. On the other hand, 
	\[
	\biggl(\mathbb{1}^T \otimes D\biggr)_{1\leq i\leq n}=\begin{cases}
		\max\left(1,0\right)=1, & 1\leq i \leq n,~ i\neq m,l, \\
		\max\left(\frac{x_1}{y_l},\frac{x_2}{y_l},\cdots,\frac{x_n}{y_l}\right)=1, & i=l, \\
		\max\left(1,1,\cdots,1\right)=1, & i=m.
	\end{cases}
	\]
	Also $ D $ is a max-column stochastic matrix and hence $D\in MDS_n(\mathbb{R}_{\max})$.
	Considering for $ 1\leq i \leq n $,
	\[
	x_i\geq \min_{1 \leq i \leq n} x_i \geq \min_{1 \leq i \leq n} y_i=y_m
	\]
	the following result is obtained:
	\[
	\biggl(D\otimes y\biggr)_{1\leq i\leq n}=\biggl(\bigoplus_{j=1}^{n}d_{i,j}y_j\biggr)_{1\leq i\leq n}=\begin{cases}
		\max(y_m, \dfrac{x_i}{y_l}\times y_l) = x_i,&  1 \leq i \leq n,~ i\neq k, \\
		\max_{1 \leq i \leq n}y_i=\max_{1 \leq i \leq n} x_i=x_k, & i=k.
	\end{cases}
	\]
	Therefore, \( x = D \otimes y \), which implies \( x \prec_{\max} y \).
\end{proof}
\end{thm}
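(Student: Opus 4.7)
The plan is to prove the biconditional by handling the two implications separately, relying on two facts established earlier in the paper: every entry of a max-doubly stochastic matrix lies in $[0,1]$ (Proposition \ref{rem3}), and every row and every column of such a matrix contains at least one entry equal to $1$ (from the definition together with $d_{i,j}\le 1$). The forward direction will be an inequality-chasing argument; the converse will be handled by writing down an explicit $D$.

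For the forward direction, assume $x = D\otimes y$ with $D\in MDS_n(\mathbb{R}_{\max})$. Since $x_i = \max_j d_{i,j} y_j$ and $d_{i,j}\le 1$, I immediately get $x_i \le \max_j y_j$, hence $\max_i x_i \le \max_i y_i$. For the reverse, I would pick an index $h$ where $y_h$ attains $\max y$, then use max-column stochasticity to locate some $k$ with $d_{k,h}=1$, giving $x_k \ge y_h$. For the minimum inequality, I would fix $i$, use max-row stochasticity to find some $t_i$ with $d_{i,t_i}=1$, and conclude $x_i \ge y_{t_i} \ge \min_j y_j$; taking the minimum over $i$ closes the argument.

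For the converse, given the two scalar conditions I would construct $D$ explicitly. Let $k,l,m$ satisfy $x_k = \max x$, $y_l = \max y$, $y_m = \min y$; by hypothesis $x_k = y_l$ and $\min x \ge y_m$. I would put $1$'s everywhere in row $k$ and everywhere in column $m$, set $d_{i,l} = x_i/y_l$ for each $i$, and put $0$'s elsewhere. Every $d_{i,l}$ lies in $[0,1]$ because $x_i \le x_k = y_l$, and $d_{k,l}=1$ is consistent. Max-row stochasticity holds because row $k$ is all $1$'s and every other row contains the entry $d_{i,m}=1$; max-column stochasticity is symmetric. To verify $x = D\otimes y$, I would compute for $i\ne k$ that $(D\otimes y)_i = \max(y_m,\,(x_i/y_l)y_l) = \max(y_m, x_i) = x_i$, using precisely $x_i \ge \min x \ge y_m$, while $(D\otimes y)_k = \max_j y_j = y_l = x_k$.

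The main obstacle I anticipate is the converse construction: one has to simultaneously achieve max-row stochasticity, max-column stochasticity, and the equation $x = D\otimes y$ from minimal scalar information about $x$ and $y$. The key idea that makes the construction succeed is a deliberate division of labor between two distinguished columns, column $m$ providing a uniform lower floor of $y_m$ in every coordinate of $D\otimes y$, and column $l$ carrying the scaled entries $x_i/y_l$ that actually hit $x_i$. The hypothesis $\min x \ge \min y$ is exactly what guarantees that the floor does not overshoot the target, and degenerate coincidences among $k$, $l$, $m$ cause no trouble since a single $1$ can serve several roles at once.
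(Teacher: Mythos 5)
Your proposal is correct and follows essentially the same route as the paper's own proof: the same inequality-chasing for the forward direction (using that every entry of $D$ is at most $1$ and that every row and column contains an entry equal to $1$), and the identical explicit construction of $D$ for the converse, with all $1$'s in row $k$ and column $m$, the scaled entries $x_i/y_l$ in column $l$, and zeros elsewhere. No gaps.
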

The relation $\prec$ is reflexive and transitive but not a partial order on $\mathbb{R}^n$. Specifically, if \(x \prec y\) and \(y \prec x\), we can only conclude that \(x = Py\) for some permutation matrix \(P\). Defining the equivalence relation \(x \sim y\) by \(x = Py\) for some permutation matrix \(P\), we obtain the quotient space \(\mathbb{R}^n_{\text{sym}}\). On this space, the relation \(\prec\) induces a partial order. Furthermore, \(\prec\) forms a partial order on the subset \(\{x \in \mathbb{R}^n : x_1 \geq x_2 \geq \cdots \geq x_n\}\) \cite{batia,marshal}.  
In max algebra, the relation \(\prec_{\max}\) is reflexive and transitive. However, Theorem \ref{theorem2} establishes that if \(x \prec_{\max} y\) and \(y \prec_{\max} x\), it does not necessarily follow that the vector \( x \) is a permutation of the vector \( y \).  

In classical linear algebra, a vector \(x \in \mathbb{R}^n\) is majorized by \(y \in \mathbb{R}^n\) if and only if \(x\) belongs to the convex hull of all vectors obtained by permuting the coordinates of \(y\) \cite{batia,marshal}. However, as demonstrated by the following theorem, this characterization does not extend to  the context of max algebra.

Suppose \( y=\left(y_1,\cdots,y_n\right)^T \in \mathbb{R}^n_{\max} \). Define
\begin{equation}
	y_{\mathrm{min}} = \min_{1 \leq i \leq n} y_i, \qquad y_{\mathrm{max}} = \max_{1 \leq i \leq n} y_i.
\end{equation}
For $ 1\leq i\leq n $, let \( y^{(i)} \in \mathbb{R}^n_{\max} \) be the vector defined as
\begin{equation}\label{rabete6}
	y^{(i)} = \left( y_{\mathrm{min}}, \dots, y_{\mathrm{min}}, y_{\mathrm{max}}, y_{\mathrm{min}}, \dots, y_{\mathrm{min}} \right)^T,
\end{equation}
where the \( i \)-th entry of \( y^{(i)} \) is \( y_{\mathrm{max}} \), and all other entries equal to \( y_{\mathrm{min}} \).
We require following lemma to prove the following theorem, which relates the set \( \left\{ x \in \mathbb{R}_{\max}^{n} \mid x \prec_{\max} y \right\} \) to the vectors \( \left\{ y^{(i)} \right\}_{i=1}^{n} \).
\begin{lem}\label{lem6}
	Let $ x_1,x_2,y\in\mathbb{R} _{\max}^{n} $ such that $ x_1,x_2\prec_{\max} y $ and let $ \alpha_1,\alpha_2\in\mathbb{R}_{\max} $ satisfy $ \alpha_1\oplus\alpha_2=1 $. Then
	\[
	\alpha_1 x_1\oplus\alpha_2 x_2\prec_{\max} y.
	\]
	\begin{proof}
		Since $ x_1,x_2\prec_{\max} y $, by Definition \ref{def3.3}, there exist max-doubly stochastic matrices $ D_1,D_2 $ such that $ x_1=D_1\otimes y $ and $ x_2=D_2\otimes y $. Using this, we obtain
		\[
		\alpha_1 x_1\oplus\alpha_2 x_2=\left( \alpha_1 D_1\oplus\alpha_2 D_2 \right)\otimes y.
		\]
		Since $ MDS_n\left(\mathbb{R}_{\max}\right) $ is a max-convex set (by Theorem \ref{lem1.2}), it follows that $ \alpha_1 D_1\oplus\alpha_2 D_2 \in MDS_n(\mathbb{R}_{\max}) $, which implies $ \alpha_1 x_1\oplus\alpha_2 x_2 \prec_{\max} y $.
	\end{proof}
\end{lem}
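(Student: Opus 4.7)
The plan is to unpack the definition of max-majorization for both $x_1$ and $x_2$, combine the resulting max-doubly stochastic matrices using the same scalars $\alpha_1,\alpha_2$, and then verify that this combined matrix is itself max-doubly stochastic and witnesses the desired majorization.

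First I would apply Definition \ref{def3.3} to obtain matrices $D_1,D_2\in MDS_n(\mathbb{R}_{\max})$ with $x_1=D_1\otimes y$ and $x_2=D_2\otimes y$. Since the candidate witness for $\alpha_1 x_1\oplus\alpha_2 x_2$ should depend linearly (in the tropical sense) on $D_1,D_2$, the natural guess is $D=\alpha_1 D_1\oplus\alpha_2 D_2$. Using distributivity of $\otimes$ over $\oplus$ (which holds entrywise in max algebra, just as in the matrix identities already exploited in the proof of Theorem \ref{thm2.2}), I would compute
\[
D\otimes y=\bigl(\alpha_1 D_1\oplus\alpha_2 D_2\bigr)\otimes y=\alpha_1(D_1\otimes y)\oplus\alpha_2(D_2\otimes y)=\alpha_1 x_1\oplus\alpha_2 x_2,
\]
so the candidate produces the correct vector.

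It then remains to show that $D\in MDS_n(\mathbb{R}_{\max})$, and this is precisely the content of Theorem \ref{lem1.2}, which states that the set $MDS_n(\mathbb{R}_{\max})$ is max-convex: given $\alpha_1\oplus\alpha_2=1$ and $D_1,D_2\in MDS_n(\mathbb{R}_{\max})$, the combination $\alpha_1 D_1\oplus\alpha_2 D_2$ is again max-doubly stochastic. Together with the preceding display, this yields $\alpha_1 x_1\oplus\alpha_2 x_2\prec_{\max} y$.

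The only conceptual point that could cause trouble is verifying the distributive identity for the matrix-vector product above; however this is a routine entrywise computation of $\max_j(\max(\alpha_1 d^{(1)}_{i,j},\alpha_2 d^{(2)}_{i,j})\,y_j)$ and equals $\max(\alpha_1\max_j d^{(1)}_{i,j}y_j,\alpha_2\max_j d^{(2)}_{i,j}y_j)$, so there is in fact no real obstacle once one invokes Theorem \ref{lem1.2}. In effect the lemma is a direct transfer of max-convexity of $MDS_n(\mathbb{R}_{\max})$ to max-convexity of the majorization cone $\{x:x\prec_{\max} y\}$, and that is the only substantive ingredient.
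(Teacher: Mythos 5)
Your proposal is correct and follows exactly the same route as the paper: extract $D_1,D_2$ from Definition \ref{def3.3}, form $\alpha_1 D_1\oplus\alpha_2 D_2$, use distributivity of $\otimes$ over $\oplus$, and invoke the max-convexity of $MDS_n(\mathbb{R}_{\max})$ from Theorem \ref{lem1.2}. Your explicit entrywise check of the distributive identity is a small addition the paper leaves implicit, but the argument is otherwise identical.
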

\begin{thm}\label{theorem5}
 Let $ y \in \mathbb{R}^n_{\max} $. Then
		\begin{align*}
			\left\{ x\in\mathbb{R} _{\max}^{n} \mid x\prec_{\max} y \right\} =
			\mathrm{co}_{\max}\left(\{y^{(i)}\}_{i=1}^n\right).
		\end{align*}
\begin{proof}
 Let $ y=\left(y_1,y_2,\dots,y_n\right)^T\in \mathbb{R}^n_{\max} $, and define
 \begin{equation*}
	\Lambda_1=\left\{ x\in\mathbb{R}_{\max}^{n} \mid x\prec_{\max} y \right\},\quad  \Lambda_2=\mathrm{co}_{\max}\left(\{y^{(i)}\}_{i=1}^n\right).
 \end{equation*}
 We aim to prove that $ \Lambda_1\subseteq\Lambda_2 $ and $ \Lambda_2\subseteq\Lambda_1 $. Suppose $ x\in\Lambda_2 $. By definition \ref{dfn3}, we have
 $$ x=\bigoplus_{i=1}^m\alpha_{i}y^{(i)}, $$
 where $ 1\leq m \leq n $, $ \bigoplus_{i=1}^{m}\alpha_{i}=1 $, $ \alpha_i\in\zR_{\max} $ and $ y^{(i)} $ is obtained by relatioin \ref{rabete6}. Suppose $ y_{\mathrm{min}} $ is $ k $-th entry of the vector $ y $ and we know $ y_i^{(i)}=y_{\mathrm{max}} $. Hence we define the matrix $ D^{(i)}\in M_n\left(\zR_{\max}\right) $ as follows:
 \begin{itemize}
	\item all entries in the $ k $-th column of $ D^{(i)} $ are equal to $ 1 $.
	\item all entries in the $ i $-th row of $ D^{(i)} $ are equal to $ 1 $.
	\item all other entries of $ D^{(i)} $ are equal to $ 0 $.
 \end{itemize}
 It is evident that $ D^{(i)}\in MDS_n(\mathbb{R}_{\max}) $ and $ y^{(i)}=D^{(i)}\otimes y $.
 Since each $ y^{(i)}\prec_{\max}y $ and $ \bigoplus_{i=1}^{m}\alpha_{i}=1 $, Lemma \ref{lem6} implies that
$$ \bigoplus_{i=1}^m\alpha_{i}y^{(i)}=x\prec_{\max}y. $$‌
 Therefore $ x\in\Lambda_1 $, and we conclude that $ \Lambda_2\subseteq\Lambda_1 $.
	
 To prove the reverse inclusion, let $x=\left(x_1,\dots,x_n\right)^T\in\Lambda_1$. Then by Theorem \ref{theorem2}, we have $x_{\max}=y_{\max}$ and $ x_{\min}\geq y_{\min}$. For every $1\leq i\leq n$, define $\alpha_i=\frac{x_i}{y_{\max}}$ and let $z=\bigoplus_{i=1}^{n}\alpha_i y^{(i)}$. Then
 \[\max_{1\leq i\leq n}\alpha_i=\frac{x_{\max}}{y_{\max}}=1,\]
 which implies $\bigoplus_{i=1}^n \alpha_i=1$, so $z\in\Lambda_2$. On the other hand, for every $1\leq j\leq n$, 
 \[z_j=\max_{1\leq i\leq n} \alpha_i y^{(i)}_j=\max \left(  \frac{x_j}{y_{\max}}y_{\max},\,\max_{i\neq j}\alpha_i y_{\min}\right)=\max \left(x_j,\,\max_{i\neq j}\alpha_i y_{\min}\right).\]
 Note that for every $1\leq i\leq n$, we have $\alpha_i\leq 1$  and hence $\alpha_i y_{\min}\leq y_{\min}\leq x_{\min}\leq x_j$. Therefore,
 \[z_j=\max \left(  x_j,\,\max_{i\neq j}\alpha_i y_{\min}\right)=x_j.\]
 It follows that $x=z\in \Lambda_2$. This establishes $ \Lambda_1\subset \Lambda_2 $, which completes the proof.
\end{proof}
\end{thm}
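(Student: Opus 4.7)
The plan is to prove the claimed set equality by double inclusion. Let me denote the left-hand set by $\Lambda_1$ and the right-hand max-convex hull by $\Lambda_2$.

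For the inclusion $\Lambda_2 \subseteq \Lambda_1$, I would first observe that each generator $y^{(i)}$ lies in $\Lambda_1$: by construction $\max_j y^{(i)}_j = y_{\max}$ and $\min_j y^{(i)}_j = y_{\min}$, so both conditions of Theorem \ref{theorem2} are satisfied (with equality), giving $y^{(i)} \prec_{\max} y$. Then Lemma \ref{lem6} tells me that the set $\Lambda_1$ is closed under max-convex combinations of pairs, so a straightforward induction extends this closure to any finite max-convex combination of the vectors $y^{(1)}, \ldots, y^{(n)}$. Hence $\Lambda_2 \subseteq \Lambda_1$.

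For the reverse inclusion $\Lambda_1 \subseteq \Lambda_2$, the task is constructive: given an arbitrary $x = (x_1,\ldots,x_n)^T$ with $x \prec_{\max} y$, I need to exhibit scalars $\alpha_i \in \mathbb{R}_{\max}$ with $\bigoplus_i \alpha_i = 1$ realizing $x = \bigoplus_{i=1}^n \alpha_i y^{(i)}$. The natural ansatz is $\alpha_i = x_i / y_{\max}$, forced by the requirement that the ``diagonal'' term $\alpha_j y^{(j)}_j = \alpha_j y_{\max}$ hit $x_j$ exactly. Using Theorem \ref{theorem2}, since $\max_i x_i = y_{\max}$, one immediately gets $\max_i \alpha_i = 1$, so the $\alpha_i$ form a valid max-convex combination. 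Checking coordinate-wise, $\alpha_i y^{(i)}_j$ equals $x_j$ when $i = j$ and equals $\alpha_i y_{\min}$ when $i \neq j$; the off-diagonal terms satisfy $\alpha_i y_{\min} \le y_{\min} \le \min_i x_i \le x_j$, so the maximum over $i$ is $x_j$. This gives $x = \bigoplus_i \alpha_i y^{(i)} \in \Lambda_2$.

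The main obstacle is not technical but conceptual: guessing the right scaling $\alpha_i = x_i/y_{\max}$. Once this choice is in hand, the two halves of Theorem \ref{theorem2} do precisely the work needed --- the equality $\max x = y_{\max}$ normalizes the coefficients, while the inequality $\min x \ge y_{\min}$ ensures the off-diagonal $y_{\min}$ contributions cannot exceed $x_j$ and therefore never interfere with the diagonal term. I do not expect additional difficulties beyond this, since Lemma \ref{lem6} handles the forward inclusion cleanly and the coefficient check is a short case analysis.
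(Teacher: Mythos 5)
Your proposal is correct and follows essentially the same route as the paper: Lemma \ref{lem6} for the inclusion of the max-convex hull, and the coefficients $\alpha_i = x_i/y_{\max}$ with the same coordinate-wise check for the reverse inclusion. The only cosmetic difference is that you verify $y^{(i)} \prec_{\max} y$ via the characterization in Theorem \ref{theorem2} (and note the induction needed to pass from pairs to finite combinations), whereas the paper exhibits an explicit max-doubly stochastic matrix $D^{(i)}$ with $y^{(i)} = D^{(i)} \otimes y$; both are valid.
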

In the examples below, we apply the preceding theorem to identify the vectors $ x $ that satisfy $ x\prec_{\max} y $ for an arbitrary vector $ y \in \mathbb{R}^n_{\max} $.
\begin{exam}
	Let $ y=\left( 2,1\right)^{T}\in\zR _{\max}^{2} $.  By using the characterization of Theorem \ref{theorem2}, we obtain
	{\fontsize{9}{.2}
	\[
	\left\{ x=\left(x_1,x_2\right)^T\in\zR _{\max}^{2} |~ x\prec_{\max} y \right\}=\left\{ x\in\zR ^{2} |~ x_1=2~\mbox {and}~1\le x_2\le 2 \right\}\bigcup
	\left\{ x\in\zR ^{2}|~ x_2=2~\mbox {and}~1\le x_1\le 2 \right\},
	\]
}
	as shown in Figure \ref{fig:1}.
	\begin{figure}[h]
		\centering
		\includegraphics[width=0.29\linewidth]{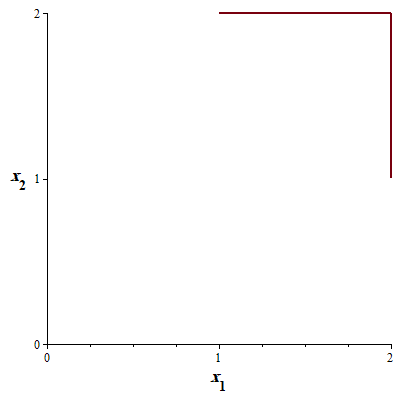}
		\caption{{\small $ \left\{ x\in\zR _{\max}^{2} |~ x\prec_{\max} \left( 2,1\right)^{T} \right\} $}}
		\label{fig:1}
	\end{figure}
\end{exam}
\begin{exam}
	Suppose $ y=(1,2,3)^T\in \zR _{\max}^{3} $. In a similar way,
\[
\left\{ x\in\zR _{\max}^{3} |~ x\prec_{\max} y \right\}=\bigcup_{i=1}^3 \left\{ x=\left(x_1,x_2,x_3\right)^T\in\zR ^{3} |~ x_i=3~\text{and for}~ j\neq i ~\text{we have}~ 1\leq x_j\leq 3\right\},
\]
as shown in Figure \ref{fig:3}.
\begin{center}
	\begin{tikzpicture}
		\begin{axis}[
			view={69}{33}, 
				axis lines=center,
				xlabel={$\bm{x_1}$}, ylabel={$\bm{x_2}$}, zlabel={$\bm{x_3}$},
				xtick={0,1,2,3,4}, ytick={0,1,2,3,4}, ztick={0,1,2,3,4},
				xmin=0, xmax=4, ymin=0, ymax=4, zmin=0, zmax=4,
				ticklabel style={font=\tiny},
			colormap/jet,
			]

			\addplot3[surf,shader=flat, draw=none, fill=red, opacity=0.5, samples=10, domain=1:3, y domain=1:3]
			({x}, {3}, {y});
			\addplot3[surf,shader=flat, draw=none, fill=red, opacity=0.5, samples=10, domain=1:3, y domain=1:3]
			({x}, {y}, {3});
			
			\addplot3[surf,shader=flat, draw=none, fill=red, opacity=0.5, samples=10, domain=1:3, y domain=1:3]
			({3}, {x}, {y});
		\end{axis}
	\end{tikzpicture}
	    \captionof{figure}{{\small $ \left\{ x\in\zR _{\max}^{3} |~ x\prec_{\max} \left( 1,2,3\right)^{T} \right\} $}}
	\label{fig:3}
\end{center}
\end{exam}
The following theorem demonstrates how max-majorization can be used to determine whether a matrix is max-doubly stochastic or not.
\begin{thm}
	The matrix \( A\in M_n\left(\zR_{\max}\right) \) is a max-doubly stochastic matrix if and only if \( A \otimes x \prec_{\max} x \) for every vector \( x \in \mathbb{R}_{\max}^{n} \).
	\begin{proof}
		Let \( A\in M_n\left(\zR_{\max}\right) \) be a matrix such that \( A \otimes x \prec_{\max} x \) for all vectors \( x \in \mathbb{R}_{\max}^{n} \). First, we show that \( A \) is a max-doubly stochastic matrix. Choose \( x = \mathbb{1} \). Then \( A \otimes \mathbb{1} \prec_{\max} \mathbb{1} \), and so by Theorem \ref{theorem2}, $A\otimes \mathbb{1}=\mathbb{1}$. Hence, \( A \) is a max-row stochastic matrix.
		Now, for \( 1 \leq i \leq n \), let \( x = e_i \). We have \( A \otimes e_i \prec_{\max} e_i \),  so the maximum entry in the \( i \)-th column of \( A \) equals \( 1 \), which shows that \( A \) is max-column stochastic matrix.
		Since \( A \) is both max-row and max-column stochastic matrix, it follows that \( A \) is a max-doubly stochastic matrix.
		
		Conversely, if \( A \) is a max-doubly stochastic matrix. It follows from the definition of max-majorization that \( A \otimes x \prec_{\max} x \) for any vector \( x \in \mathbb{R}_{\max}^{n} \) by taking \( D = A \).
	\end{proof}
\end{thm}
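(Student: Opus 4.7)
The plan is to prove both directions. The forward direction is essentially immediate from the definition: if $A \in MDS_n(\mathbb{R}_{\max})$, then for any $x \in \mathbb{R}_{\max}^n$, the equation $A \otimes x = A \otimes x$ witnesses that $A \otimes x \prec_{\max} x$, with $D = A$ serving as the required max-doubly stochastic matrix in Definition \ref{def3.3}. So the real content lies in the converse.

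For the converse, the idea is to test the hypothesis $A \otimes x \prec_{\max} x$ against carefully chosen probe vectors, and then invoke Theorem \ref{theorem2} (the characterization via $\max$ and $\min$ coordinates) to read off the two stochasticity conditions separately. First I would plug in $x = \mathbb{1}$. Then $A \otimes \mathbb{1} \prec_{\max} \mathbb{1}$ forces $\max_i (A \otimes \mathbb{1})_i = 1$ and $\min_i (A \otimes \mathbb{1})_i \geq 1$, which squeezes every coordinate of $A \otimes \mathbb{1}$ to exactly $1$, i.e., $A \otimes \mathbb{1} = \mathbb{1}$. This establishes that $A$ is max-row stochastic.

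Next I would plug in the standard basis vectors $x = e_i$ for each $1 \leq i \leq n$. Here $A \otimes e_i$ is precisely the $i$-th column of $A$, and the hypothesis $A \otimes e_i \prec_{\max} e_i$ combined with the max-coordinate clause of Theorem \ref{theorem2} gives $\max_{1 \leq j \leq n} a_{j,i} = \max_{1 \leq j \leq n}(e_i)_j = 1$. Since this holds for every column, we obtain $\mathbb{1}^T \otimes A = \mathbb{1}^T$, so $A$ is max-column stochastic as well. Combining both conclusions yields $A \in MDS_n(\mathbb{R}_{\max})$.

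There is no serious obstacle here; the whole argument is a direct unpacking of Definition \ref{def3.3} together with two evaluations of Theorem \ref{theorem2}. The only subtlety worth checking is that the $\min$-coordinate condition is indeed what forces $A \otimes \mathbb{1} = \mathbb{1}$ entry-wise (rather than merely $\leq \mathbb{1}$ from Proposition \ref{rem3}), so I would state that squeeze step explicitly to make the deduction transparent.
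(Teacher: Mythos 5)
Your proposal is correct and follows essentially the same route as the paper's own proof: both directions are handled identically, testing the hypothesis against $x=\mathbb{1}$ to get max-row stochasticity and against $x=e_i$ to get max-column stochasticity, with the converse being immediate from Definition \ref{def3.3} by taking $D=A$. Your explicit remark that the $\min$-clause of Theorem \ref{theorem2} is what squeezes $A\otimes\mathbb{1}$ up to $\mathbb{1}$ is a small but welcome clarification of a step the paper leaves implicit.
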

\section*{Acknowledgments}
I would like to express my gratitude to Dr.~Mostafa Einollahzadeh, Associate Professor in the Department of Mathematical Sciences at Isfahan University of Technology, for his invaluable guidance and support during the preparation of this article. His insightful comments and suggestions greatly enhanced the quality of this work.
\bibliographystyle{amsplain}

\begin{thebibliography}{9}
		
\bibitem{ando} 
	T. Ando, \textit{Majorization and Inequalities in Matrix Theory}, Linear Algebra Appl., 199 (1994), 17-67.
		
\bibitem{arnold}
	B. C. Arnold, \textit{Majorization and the Lorenz Order: A Brief Introduction}, Lecture Notes in Statistics, vol 43, Springer, New York, NY, 1987. \url{https://doi.org/10.1007/978-1-4615-7379-1_1}.
		
\bibitem{batia}
	R. Bhatia, \textit{Matrix Analysis}, Springer-Verlag, New York, 1997.
		
\bibitem{butkovich1}
	P. Butkovi\u{c}, H. Schneider, S. Sergeev, \textit{Generators, extremals and bases of max cones}, Linear Algebra Appl., 421 (2007), 394-406.
		
\bibitem{butkovich}
	P. Butkovi\u{c}, \textit{Max-Linear Systems: Theory and Algorithms}, Springer Monogr. Math., Springer-Verlag, London, 2010.
		
\bibitem{gabert2}
	S. Gaubert, R. D. Katz, \textit{The Minkowski theorem for max-plus convex sets}, Linear Algebra Appl., 421 (2007), 356-369.

\bibitem{johnson} 
	R. A. Horn, C. R. Johnson, \textit{Matrix Analysis}, 2nd ed., Cambridge University Press, New York, 2013.

\bibitem{MPS22}
S. M. Manjegani, A. Peperko, H. Shokooh Saljooghi, \textit{Asymptotic formulae and inequalities for point spectrum in max algebra}, Linear Algebra Appl., 634 (2022), 112-136.

\bibitem{marshal}
	A. Marshall, I. Olkin, \textit{Inequalities: Theory of Majorization and its Applications}, 2nd ed., Springer, New York, 2011.
		
\bibitem{MP}
	V. Muller, A. Peperko, \textit{On the spectrum in max-algebra}, Linear Algebra Appl., 485 (2015), 250-266.
		
\bibitem{nielsen}
	M. Nielsen, \textit{Conditions for a Class of Entanglement Transformations}, Phys. Rev. Lett., 83(2) (1999), 436-439.
		
\end{thebibliography}

\end{document}